\pdfoutput=1
\documentclass[11pt]{article}
\usepackage[a4paper,margin=28mm]{geometry}
\usepackage[T1]{fontenc}
\usepackage{lmodern,amsmath,amssymb,amsthm,mathtools,microtype}
\usepackage{graphicx,booktabs}
\usepackage[section]{placeins}
\usepackage[hidelinks]{hyperref}
\numberwithin{equation}{section}
\numberwithin{table}{section}
\newtheorem{theorem}{Theorem}[section]
\newtheorem{lemma}[theorem]{Lemma}
\newtheorem{corollary}[theorem]{Corollary}
\newtheorem{proposition}[theorem]{Proposition}
\theoremstyle{remark}
\newtheorem{remark}[theorem]{Remark}
\newtheorem{question}[theorem]{Question}
\newcommand{\Mo}{\operatorname{Mo}}
\title{Reversing the Mostar line-graph inequality\\with long pendant paths}
\hypersetup{pdftitle={Reversing the Mostar line-graph inequality with long pendant paths},
pdfauthor={Kerem Kat}}
\author{Kerem Kat\\[2pt]
\small Independent Researcher\\
\small Corresponding author: \href{mailto:keremkat@gmail.com}{\texttt{keremkat@gmail.com}}}
\date{}
\begin{document}
\maketitle
\begin{abstract}
Let $K_t$ be obtained by attaching a pendant path of length $t$ to a
fixed rooted graph $K$. We prove that $\Mo(L(K_t))-\Mo(K_t)$, the
difference between its line-graph and original Mostar indices, is
exactly affine on each parity class beyond a sharp uniform cutoff. The slope depends on
distance-level neighbor counts and is positive for every non-bipartite
core. Triangle chains give infinitely many graphs with
$\Mo(L(G))>\Mo(G)$ at every positive cyclomatic number $c$, with
maximum degree three, resolving Alex--Indulal Problem 3.3. At zero
slope, a cactus branch-mass formula decides equality; identical vertex
profiles need not give identical intercepts. Fixed cores giving
equality for all sufficiently long attachments exist exactly when $c$
is odd. Maximum degree three suffices, and at every even $c$ a
binary-tree construction gives equality on one eventual parity class.
\end{abstract}
\noindent\textit{Keywords:} Mostar index; line graph; cyclomatic number;
cactus; pendant path.\par
\noindent\textit{2020 Mathematics Subject Classification:} 05C09, 05C12, 05C76, 05C92.

\section{Introduction}
All graphs are finite, simple, undirected, and connected. For an edge
$uv$ of $G$, let
$n_u(uv;G)=|\{w\in V(G):d_G(w,u)<d_G(w,v)\}|$, where $d_G$ is shortest-path
distance. The \emph{Mostar index}, introduced by
Do\v{s}li\'{c} et al.~\cite{MostarOriginal}, is
\[
 \Mo(G)=\sum_{uv\in E(G)}|n_u(uv;G)-n_v(uv;G)|.
\]
Equidistant vertices enter neither count. The line graph $L(G)$ has
vertex set $E(G)$, with adjacency when two edges share an endpoint.
Write $\operatorname{cyc}(G)=|E(G)|-|V(G)|+1$.
A \emph{cactus} is a connected graph in which distinct cycles share at
most one vertex.

Sardar et al.~\cite[Conjecture 5]{Sardar} proposed
$\Mo(L(G))\leq\Mo(G)$. Alex and Indulal~\cite[Theorem 2.5]{AlexIndulal}
disproved it with infinite families at each cyclomatic number
$c\in\{1,2,3\}$. Their Problem 3.3 asks for infinitely many
counterexamples at every fixed $c\geq4$; Problems 3.1 and 3.2 ask
which non-tree graphs satisfy strict inequality and which satisfy
equality. The $[n]$circulenes of Rajabinejad and Mohammadian
Semnani~\cite[Corollary 2.3]{Circulenes} also reverse the inequality
for $n\geq4$, but their cyclomatic number $n+1$ grows with the family parameter.

For broader context, see Ali and Do\v{s}li\'{c}'s survey~\cite{AliDoslic},
Havare's bridge-attachment bounds~\cite[Theorem 3.1]{BridgeGraphs},
and Ghanbari and Alikhani's point-attachment formulas~\cite{Polymers}.
The edge Mostar index~\cite{AlexImran} counts edges nearer to the
endpoints of an edge of $G$; it differs from $\Mo(L(G))$.

\subsection{Main results}
For a \emph{rooted core} $(K,r)$, obtain $K_t$ by adjoining a pendant
path $rz_1\cdots z_t$, for integers $t\geq0$, with $K_0=K$. This
preserves cyclomatic number. Both indices grow like $t^2/2$; their
quadratic terms cancel.
\emph{Eventually affine on each parity class} means exactly
$\alpha t+\beta_0$ at even lengths and $\alpha t+\beta_1$ at odd lengths
beyond a cutoff depending on $(K,r)$, with one affine function when
$\beta_0=\beta_1$.

\begin{theorem}[Comparison law]\label{main:law}
Let $(K,r)$ be a rooted connected graph with $m$ edges. There are
integers $\alpha\geq-1$ and $\beta_0,\beta_1$, depending only on
$(K,r)$, such that
\[
 \Mo(L(K_t))-\Mo(K_t)=\alpha t+\beta_{t\bmod2}
 \qquad\text{for every integer }t\geq\max\{1,m\},
\]
and this cutoff is sharp for each $m\geq2$. The slope $\alpha$ depends
only on how many neighbors each vertex has one level nearer to $r$, at
its own level, and one level farther away.
\end{theorem}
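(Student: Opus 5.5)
We compute both indices of $K_t$ explicitly and split each sum into the contribution of the path and the contribution of the core. Write $n=|V(K)|$ and $D_i=\{v\in V(K):d_K(v,r)=i\}$. For each $v\in V(K)$ let $a_v$, $b_v$, $c_v$ be the numbers of neighbors of $v$ in the level one nearer to $r$, in its own level, and one level farther. Every path vertex $z_j$ is closer to $r$ than to any vertex of $K$ by a fixed offset.

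\textbf{Step 1: $\Mo(K_t)$.} A path edge $z_{j-1}z_j$ is a bridge, so it contributes $|n+j-1-(t-j+1)|$. Summing over $j$ gives a quadratic in $t$ whose form is exactly piecewise polynomial. Once $t\ge n$ the absolute values no longer change sign pattern in a $t$-dependent way beyond parity, and the result is $t^2/2+(\text{linear})+(\text{parity constant})$.

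For a core edge $uv$, the path vertices all side with whichever of $u,v$ is closer to $r$. If $d(u,r)<d(v,r)$, then $n_u$ gains $t$. If the two distances are equal, the path vertices are equidistant and gain nothing. So the contribution is $|\delta_{uv}+t|$ for cross-level edges and $|\delta_{uv}|$ for same-level edges, where $\delta_{uv}$ is the core difference. For $t\ge m\ge$ any $|\delta|$ (indeed $|\delta|\le n-1\le m$ for connected $K$), these become $t+\delta_{uv}$, which is affine. The number of cross-level edges is $\sum_v a_v$.

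\textbf{Step 2: $\Mo(L(K_t))$.} In $L(K_t)$ the path edges $e_j=z_{j-1}z_j$ form a path attached to the clique of edges at $r$. The distance in $L$ from an edge $e$ to $e_j$ is governed by $\min$ over its endpoints of $d(\cdot,r)$. Edges $e_je_{j+1}$ are cut-vertices-separated, so their contributions are again $|m+j-(t-j)|$-type terms. These are quadratic in $t$ with the same leading $t^2/2$, because the vertex count $m+t$ of $L(K_t)$ differs from $n+t$ only by a constant, and that constant shifts the linear term.

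For a line-graph edge $\{e,f\}$ inside $L(K)$, with $e,f$ sharing vertex $w$, the $t$ path edges side with whichever of $e,f$ has smaller edge-level $\ell(e)=\min d(\text{endpoint},r)$. Same-level edges at $r$ (the clique on edges at $r$, together with $e_1$) need separate care. The count of pairs with $\ell(e)\neq\ell(f)$ is a local count at each vertex $w$. For example, at $w\in D_i$, edges to $D_{i-1}$ have level $i-1$, while edges to $D_i$ and $D_{i+1}$ have level $i$, giving $a_w(b_w+c_w)$ pairs. This is a function of $(a_w,b_w,c_w)$ only. So for $t$ large each such pair contributes $t+\delta'$.

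\textbf{Step 3: assemble.} The slope is $\alpha=\#\{\text{cross-level pairs in }L\}-\#\{\text{cross-level edges in }K\}+(\text{linear-term difference from the path parts})$, and this depends only on the triples $(a_v,b_v,c_v)$. The bound $\alpha\ge-1$ comes from $\sum_w a_w(b_w+c_w)\ge\sum_w a_w-1$-type counting: every $w\ne r$ has $a_w\ge1$, and the path corrections contribute the $-1$. Integrality is clear.

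\textbf{Main obstacle.} The main obstacle is the cutoff. We need every $|\delta'|$ in $L(K)$ to be at most $m$, which holds since $L(K_t)$ minus the path has $m$ vertices. We also need the path-bridge sums to have stabilized sign pattern once $t\ge m$. Pinning this uniformly at $\max\{1,m\}$, and proving sharpness, requires an explicit example at each $m\ge2$: take $K$ a path or star rooted at an extreme vertex so that some $|\delta'|=m$ is attained. We would check that at $t=m-1$ one absolute value flips and breaks the affine law.
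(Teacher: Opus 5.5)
Your decomposition (pendant bridges plus core terms that become affine once $t$ exceeds their imbalance) is the paper's route. The paper streamlines it by writing $L(K_t)$ as the line core $J=L(K+rz)$, rooted at $p=rz$, with a pendant path of length $t-1$, so the same attachment identity applies twice. Two steps in your sketch need repair, and the second is a genuine gap.

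\textbf{The root terms and the bound $\alpha\ge-1$.} The line edges $e_1f$ with $f=rw$ incident to $r$ are not ``same-level'' in the sense that matters. The path vertices $e_2,\dots,e_t$ all side with $e_1$, so each such edge contributes $|t-M_r(f)|$, where $M_r(f)\le m$. Together these edges add $\deg_K(r)$ to the slope. Your bound $\alpha\ge-1$ fails without this term. Indeed, the inequality $\sum_w a_w(b_w+c_w)\ge\sum_w a_w-1$ that you invoke is false for $C_4$: the left side is $2$ and the right side is $3$. The correct count is
\[
\alpha=-1+\deg_K(r)+\sum_{w\ne r}a_w(b_w+c_w)-\sum_{w\ne r}a_w .
\]
Using $\sum_{w\ne r}a_w=\sum_w c_w$ and $\deg_K(r)=c_r$, this becomes
\[
\alpha=-1+\sum_{w\ne r}\bigl((a_w-1)c_w+a_wb_w\bigr)\ge-1 .
\]

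\textbf{Sharpness (the genuine gap).} Your proposed witness is backwards. Internal edges of $L(K)$ have $|\delta'|\le m-2$. So the only term whose imbalance can reach $m$ is $|t-M_r(f)|$, and $M_r(f)=m$ happens exactly when $r$ is a leaf. At $t=m-1$ that term equals $1$ instead of the affine value $-1$, an excess of two. At the same length, however, the line-graph bridge sum is
\[
\sum_{j=1}^{m-2}(2j+1)=m(m-2),
\]
which is two below its quadratic closed form $\lfloor((m-2)^2+m^2)/2\rfloor$. The two corrections cancel, so the affine law still holds at $t=m-1$ for every leaf root. For example, with $K=P_{m+1}$ rooted at an end, both $K_t$ and $L(K_t)$ are paths, and the formula holds for every $t\ge0$.

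What you need is a non-leaf root. Then $M_r(f)\le m-1$, every other core term is already affine at $t=m-1$, and only the bridge deficit survives. The actual difference is therefore exactly two below the formula. The centre-rooted star $K_{1,m}$, for $m\ge2$, is the paper's witness (Proposition~\ref{prop:cutoff}).
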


\begin{theorem}[Reversal]\label{main:reversal}
If $K$ is not bipartite, then $\alpha\geq1$ at every root, so
$\Mo(L(K_t))>\Mo(K_t)$ for every sufficiently large $t$. In particular,
for each $c\geq1$, chains of $c$ triangles give infinitely many cacti of
cyclomatic number $c$ and maximum degree three with $\Mo(L(G))>\Mo(G)$,
and no smaller degree bound is possible.
\end{theorem}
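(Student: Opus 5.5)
The plan is to first make the slope $\alpha$ of Theorem~\ref{main:law} explicit as a sum over vertices of $K$, and then show this sum is at least $1$ whenever $K$ has an odd cycle. Fix the BFS levels $\ell(v)=d_K(v,r)$. For $v\in V(K)$ let $a_v,b_v,c_v$ be the numbers of neighbors at levels $\ell(v)-1,\ell(v),\ell(v)+1$. Note $a_r=b_r=0$, $c_r=\deg_K(r)$, and $a_v\ge1$ for $v\neq r$.

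\textbf{Step 1 (slope formula).} I would collect the coefficient of $t$ in $\Mo(L(K_t))-\Mo(K_t)$, as follows.
\begin{itemize}
\item In $K_t$, a core edge $uv$ with $\ell(u)\neq\ell(v)$ receives all $t$ path vertices on its lower side, so it contributes $t+O(1)$. A level edge contributes $O(1)$.
\item In $L(K_t)$, a path vertex is closer to a core edge $e=xy$ according to $\min(\ell(x),\ell(y))$. Hence two core edges meeting at $v$ contribute $t+O(1)$ exactly when one is a down-edge at $v$ and the other is a level or up-edge at $v$. This gives $a_v(b_v+c_v)$ such pairs at each $v$.
\item The $\deg_K(r)$ adjacencies between the vertex $rz_1$ of $L(K_t)$ and the core edges at $r$ each contribute $t+O(1)$.
\item The two pendant paths (length $t$ in $K_t$, length $t-1$ in $L(K_t)$, hanging off $n$ versus $m$ core vertices) have equal quadratic parts. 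Shifting the offset by a constant changes only $O(1)$, and the one fewer edge costs $-t$.
\end{itemize}
Using $\sum_v c_v=\sum_v a_v=$ (number of non-level edges), this should yield
\[
\alpha=\deg_K(r)-1+\sum_{v\neq r}a_v(b_v+c_v)-\sum_{v}a_v=-1+\sum_{v\neq r}\bigl(a_v(b_v+c_v)-c_v\bigr).
\]
This matches the description in Theorem~\ref{main:law}. As a sanity check, a star rooted at its center gives $\alpha=-1$, consistent with the bound $\alpha\ge-1$.

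\textbf{Step 2 (reversal for non-bipartite cores).} Writing $a_v(b_v+c_v)-c_v=a_vb_v+(a_v-1)c_v$ and using $a_v\ge1$ for $v\neq r$, we get
\[
\alpha\ge-1+\sum_{v\neq r}a_vb_v.
\]
Since $K$ is connected and not bipartite, the BFS layering from $r$ must contain an edge $vw$ with $\ell(v)=\ell(w)\ge1$. Otherwise the levels would give a proper $2$-colouring. Both endpoints then have $a\ge1$ and $b\ge1$, so $\sum_{v\neq r}a_vb_v\ge2$ and $\alpha\ge1$. Theorem~\ref{main:law} then gives
\[
\Mo(L(K_t))-\Mo(K_t)\ge t+\min(\beta_0,\beta_1)>0
\]
for all large $t$.

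\textbf{Step 3 (triangle chains and degree).} For $c\ge1$, let $K$ be $c$ triangles joined in a row by single bridge edges, each bridge using a distinct vertex of the triangles it joins. Root it at a degree-$2$ vertex of an end triangle. This $K$ is a non-bipartite cactus with $\operatorname{cyc}(K)=c$ and maximum degree $3$, and attaching the path keeps the maximum degree at $3$. Step~2 then gives infinitely many examples $K_t$, one for each large $t$.

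For sharpness of the degree bound: a connected graph of maximum degree at most $2$ is a path or a cycle. For a path, $L(P_n)=P_{n-1}$ and $\Mo(P_{n-1})\le\Mo(P_n)$ by direct computation. For a cycle, $L(C_n)\cong C_n$, so the two indices are equal. Hence no reversal is possible with maximum degree below $3$.

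The main obstacle is Step~1: getting the linear coefficient exactly right. The delicate points are the boundary terms at $r$ (the edges from $rz_1$ into the core) and the length mismatch between the two pendant paths, which together produce the constant $-1$. I would check the formula against the star and against a triangle rooted at a vertex, where $\alpha=1$, before relying on it. Step~2 is then a short counting argument.
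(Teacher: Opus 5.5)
Your proposal is correct and follows essentially the paper's route. Step 1 is the paper's computation $\alpha=S'-S-1$, with $S'=\deg_K(r)+\sum_{v\ne r}a_v(\deg_K(v)-a_v)$ read off from the level structure of the line core, and Step 2 is the paper's observation that a horizontal edge contributes $a_u+a_v\geq2$ while every $(a_v-1)s_v$ is nonnegative. The only difference is for triangle chains: the paper gets the exact slope $2c-1$ from unique shortest paths and an explicit threshold $t\geq4c-1$, whereas your general non-bipartite bound already suffices for the statement as posed.
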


\begin{theorem}[Equality]\label{main:equality}
There is a rooted core of cyclomatic number $c\geq1$ such that
$\Mo(L(K_t))=\Mo(K_t)$ for every sufficiently large $t$ if and only
if $c$ is odd, and it can then be chosen with maximum degree three.
For every even $c\geq2$ there is a core of maximum degree three
giving equality for every sufficiently large $t$ of one parity.
\end{theorem}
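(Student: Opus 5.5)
The plan is to reduce equality to three conditions on the affine data of Theorem~\ref{main:law}, and then treat necessity and sufficiency separately. By Theorem~\ref{main:law}, $\Mo(L(K_t))=\Mo(K_t)$ for all large $t$ holds if and only if $\alpha=0$ and $\beta_0=\beta_1=0$. Equality on one eventual parity class holds if and only if $\alpha=0$ and $\beta_i=0$ for the corresponding $i$. By Theorem~\ref{main:reversal}, $\alpha=0$ forces $K$ to be bipartite. Every core relevant to Theorem~\ref{main:equality} is therefore bipartite, and so is every $K_t$.

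\emph{Necessity (the obstacle).} I would show that for bipartite $K$ of even cyclomatic number, $D(t)=\Mo(L(K_t))-\Mo(K_t)$ changes parity with $t$ for large $t$. This makes $\beta_0=\beta_1=0$ impossible when $c$ is even.

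The parity of $\Mo(K_t)$ is easy. In a bipartite graph no vertex is equidistant from the two ends of an edge, so $|n_u-n_v|\equiv n_u+n_v=|V|\pmod 2$ for every edge. Hence $\Mo(K_t)\equiv |V||E|=n(n+c-1)\pmod 2$ with $n=|V(K)|+t$.

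The hard part is $\Mo(L(K_t))\pmod 2$. Each edge of $L$ contributes $|E|-\varepsilon\pmod 2$, where $\varepsilon$ is the number of edges of $K_t$ equidistant in $L(K_t)$ from the two adjacent edges $vx$ and $vy$. I would compute $\varepsilon$ using bipartiteness. The edge $vx$ is at distance $\min(d(g,v),d(g,x))+1$ from $g$, and the parities of $d(w,x)$ and $d(w,y)$ agree for every vertex $w$. From this I would try to express $\varepsilon\bmod 2$ through local degree data, namely counts of vertices $w$ with $d(w,x)=d(w,y)$. The contribution of the pendant path can be handled explicitly, since there these edges form a path in $L$.

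Summing over the $\sum_v\binom{\deg v}{2}$ edges of $L(K_t)$, I expect an expression whose dependence on $t$ has parity governed by $t(c-1)$ or similar. Combined with the formula for $\Mo(K_t)$, this should give $D(t+1)-D(t)\equiv 1\pmod 2$ exactly when $c$ is even. If the direct count proves unwieldy, I would first test the parity pattern on small bipartite cacti. Then I would aim to prove it by induction on $c$ via the cactus branch-mass formula mentioned in the abstract, using the general law for non-cactus cores.

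\emph{Sufficiency.} For odd $c$, I would build bipartite cacti of maximum degree three, for example chains of $c$ even cycles (squares or hexagons) joined at degree-three vertices, with the root at a suitable end. I would first verify that the slope formula from Theorem~\ref{main:law} gives $\alpha=0$ for such level structures; this requires checking the neighbor counts on each level. I would then evaluate $\beta_0$ and $\beta_1$ with the branch-mass formula and tune free parameters (cycle lengths, lengths of pendant spacers, root position) until both intercepts vanish. The parity argument above guarantees that they are then equal, so only one equation needs to be solved.

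For even $c$, I would use a binary-tree-shaped arrangement of $c$ even cycles with maximum degree three. Again I would check $\alpha=0$ and choose the tree depth or spacer lengths so that one intercept $\beta_i$ vanishes. This gives equality for all large $t$ of that single parity, which by the necessity part is the best possible.
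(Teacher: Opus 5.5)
Your reduction to $\alpha=0$ together with $\beta_0=\beta_1=0$ (or a single $\beta_i=0$) is correct and is the paper's starting point, but neither direction is then carried out.

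\textbf{Necessity.} The decisive step, the parity of $\Mo(L(K_t))$, is only announced. The lemma you aim for is also false as stated. $K_{2,3}$ is bipartite with $c=2$ and has $\alpha=-1$ at every root, so $D(t+1)-D(t)=\alpha\pm1\in\{0,-2\}$ and $D(t)$ has constant parity. Any such count must therefore involve $\alpha$, not only $c$. Moreover, a mod-$2$ argument gives at best $\beta_0\equiv\beta_1\pmod 2$, so your later claim that it guarantees $\beta_0=\beta_1$ for odd $c$ does not follow.

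The missing idea is that no distance computation inside $L(K_t)$ is needed. $L(K_t)$ is the line core $J=L(K+rz)$ carrying a pendant path of length $t-1$ at $p=rz$. Applying Lemma~\ref{lem:path} to $(K,r)$ and to $(J,p)$ gives the exact identity~\eqref{eq:generalexact}, whose only parity dependence lies in the floors of $F_{m+1}(t-1)-F_n(t)$. Since $m-n=c-1$, the residues $\rho(t+m-1)$ and $\rho(t+n-1)$ in~\eqref{eq:beta} coincide for odd $c$ and are complementary for even $c$. This gives exactly $\beta_1-\beta_0=0$ or $(-1)^{n-1}$ (Corollary~\ref{cor:parity}), which proves necessity and also supplies the exact equality $\beta_0=\beta_1$ you need for odd $c$.

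\textbf{Sufficiency.} ``Tune the parameters until the intercepts vanish'' is not an argument. The chains you propose for odd $c$ also carry far too little pair-minimum mass once $c\geq3$. In Theorem~\ref{thm:cactusconstant} the size term $2(c-1)n$ must be cancelled essentially by $2\Pi$, while in a chain each pair minimum is bounded by a small side piece.

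For example, take a root square whose antipode is joined by a path of length $p$ to a second square, which is left from a non-antipodal vertex by a path of length $q$ to a third square. The formula gives $\beta_0=\beta_1=20+4q>0$ for all $p,q\geq1$. Neither the spacer lengths nor a single further pendant path attached anywhere can lower this value.

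The paper instead hangs a complete binary tree of depth $2c$ from the special antipode of a root square. This is far larger than needed merely to carry the cycles. It places $c-1$ squares on the tree's leftmost leaves, and by Proposition~\ref{prop:binaryfamily}, $\beta_\epsilon-\theta_\epsilon$ is then about $-2\cdot 4^c$. Extending two sibling leaves by paths of lengths $x\geq y$, with $X=x+y$, raises it by $2(c-1)X-2y-2\min\{X,4(c-1)\}$. Once the top minimum saturates, the target becomes the linear equation $(c-1)X-y=A$. This is solved exactly by letting $y$ be the least residue of $-A$ modulo $c-1$ (Theorem~\ref{thm:uniformequality}). The same construction with target value $1/2$ gives the one-parity statement for even $c$.

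Your sufficiency part therefore lacks both essential ingredients: enough balanced branching to build up $\Pi$, and an exactly solvable tuning step.
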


\begin{figure}[!ht]
\centering
\includegraphics[width=0.9\linewidth]{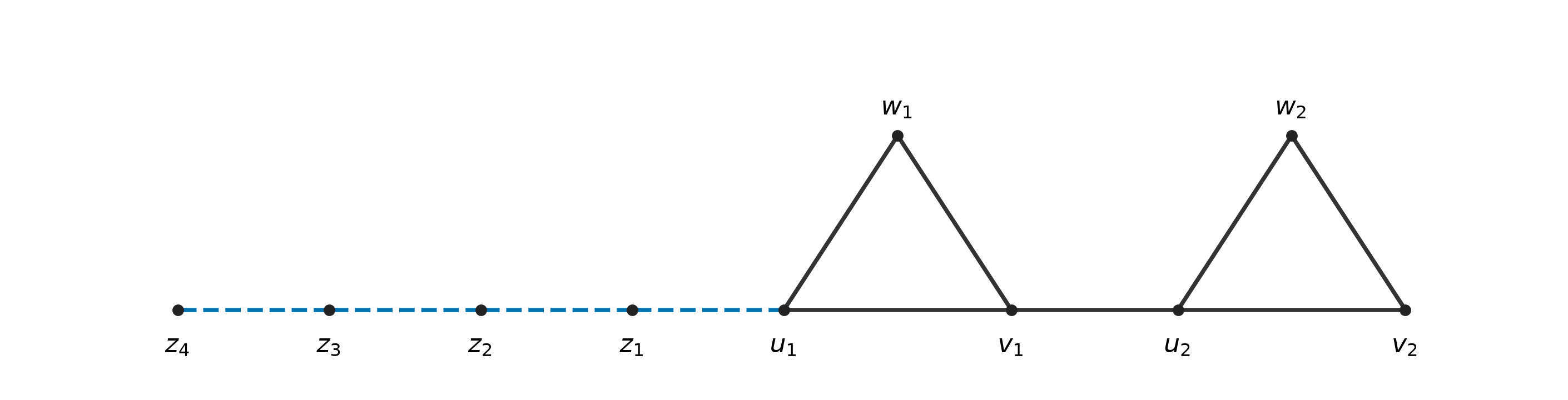}\\[2pt]
\includegraphics[width=0.76\linewidth]{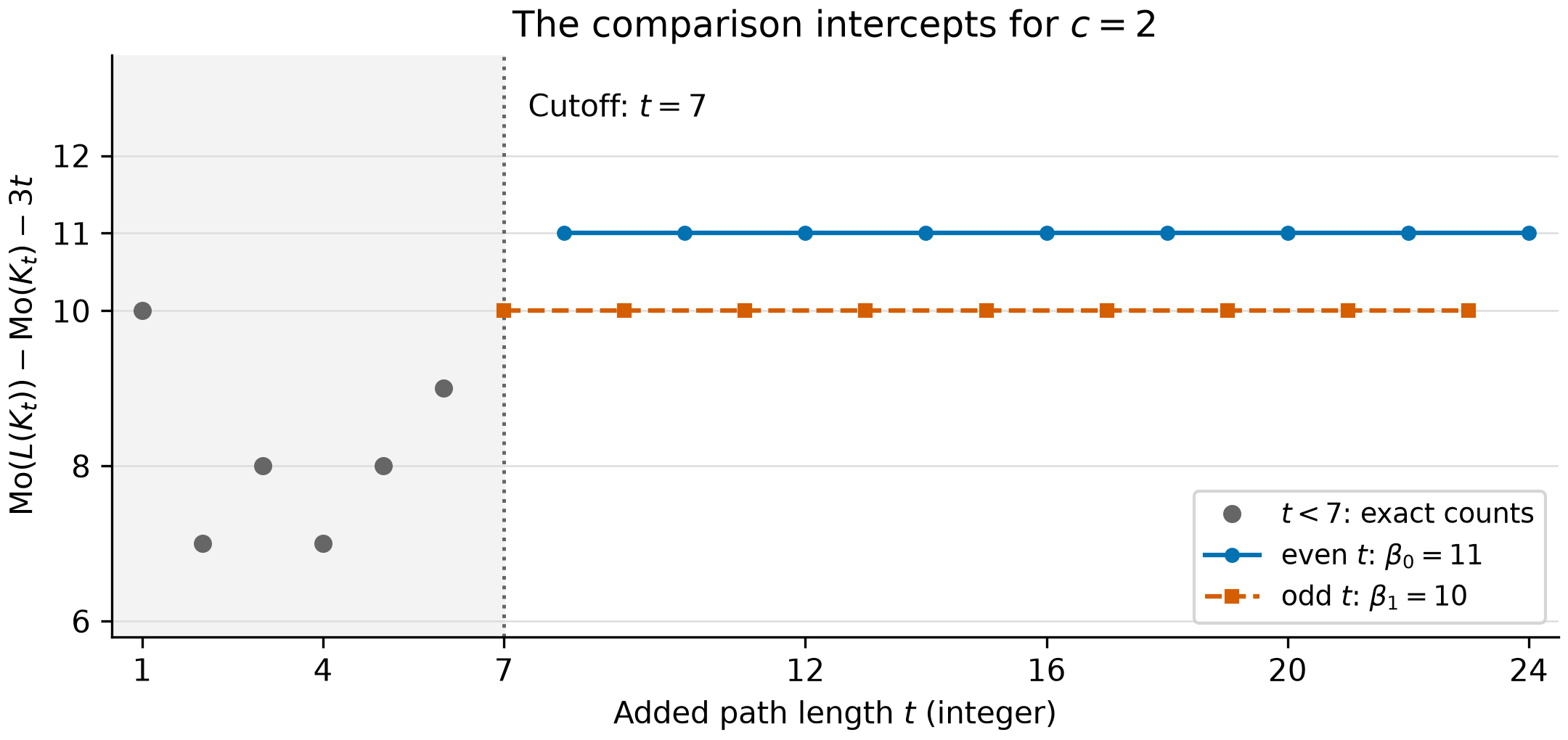}
\caption{The two-triangle chain ($c=2$, $m=7$, $\alpha=3$).
Top: the pendant path, dashed, has length $t=4$.
Bottom: exact integer-distance calculations for $1\leq t\leq24$,
after subtracting $3t$ from the comparison difference. From the cutoff
$t=7$, the values equal the intercepts $\beta_0=11$ and $\beta_1=10$,
as proved in Proposition~\ref{prop:threshold}.}
\label{fig:reversal}
\end{figure}

Theorem~\ref{main:law} is Theorem~\ref{thm:general} with
Proposition~\ref{prop:cutoff}; Theorem~\ref{main:reversal} adds
Theorem~\ref{thm:existence}, which settles Problem 3.3; and
Theorem~\ref{main:equality} combines Corollary~\ref{cor:parity} with
Theorems~\ref{thm:oddequality} and~\ref{thm:uniformequality}.
Section~\ref{sec:intercepts} computes cactus intercepts from branch
masses and shows why vertex profiles do not suffice. The appendices
contain the threshold calculation, cactus formula proof and
degree-three construction proof.

\section{The comparison law}\label{sec:path}
Fix a rooted core $(K,r)$. Put $n=|V(K)|$, $m=|E(K)|$ and $c=m-n+1$,
and call $\lambda(v)=d_K(r,v)$ the \emph{level} of $v$. Let $a_v$, $s_v$
and $h_v$ count the neighbors of $v$ at levels $\lambda(v)-1$,
$\lambda(v)+1$ and $\lambda(v)$: its \emph{predecessors},
\emph{successors} and horizontal neighbors. Thus $a_r=h_r=0$,
$s_r=\deg_K(r)$, and $a_v\geq1$ for $v\ne r$.
Table~\ref{tab:notation} collects selected recurring symbols.

\begin{table}[!ht]
\centering
\caption{Recurring notation for a rooted core $(K,r)$.}
\label{tab:notation}
\begin{tabular}{@{}ll@{}}
\toprule
Symbol & Meaning \\
\midrule
$a_v$ & Predecessor count. \\
$s_v$ & Successor count. \\
$h_v$ & Horizontal-neighbor count. \\
$S$ & Nonhorizontal edge count~\eqref{eq:sK}. \\
$B$ & Core constant~\eqref{eq:B}. \\
$\alpha$ & Comparison slope. \\
$\Pi$ & Pair-minimum sum~\eqref{eq:pairmin}. \\
$\theta_\epsilon$ & Parity term (Section~\ref{subsec:throughmass}). \\
\bottomrule
\end{tabular}
\end{table}

\subsection{Attachment and comparison}
An edge is \emph{horizontal} when its endpoints have the same level.
Orient each edge $uv$ arbitrarily and put
\[
 b_{uv}=n_u(uv;K)-n_v(uv;K),\qquad
 \sigma_{uv}=\operatorname{sgn}(\lambda(v)-\lambda(u)).
\]
Define
\begin{align}
 S=S(K,r)&=\sum_{v\ne r}a_v=\sum_v s_v,\label{eq:sK}\\
 B=B(K,r)&=\sum_{\sigma_{uv}\ne0}\sigma_{uv}b_{uv}
                  +\sum_{\sigma_{uv}=0}|b_{uv}|.\label{eq:B}
\end{align}
Here $S$ counts nonhorizontal edges, and $B$ is independent of their
orientations. For integers $n\geq1,t\geq0$, write
\begin{equation}\label{eq:F}
 F_n(t)=\sum_{j=1}^t|n-t+2j-2|.
\end{equation}
For $m>0$, Tratnik's quotient formula~\cite[Corollary 3.5]{Tratnik}
gives the next identity using $E(K)$ and the individual pendant edges
as partition blocks. This partition is admissible because every bridge forms a
singleton class of the transitive closure of the Djokovi\'{c}--Winkler relation.
The core quotient has root weight $t+1$ and all other
vertex weights one. The direct proof retains the signed contributions
needed below.

\begin{lemma}[Attachment identity]\label{lem:path}
For every integer $t\geq0$,
\begin{equation}\label{eq:attachment}
 \Mo(K_t)=F_n(t)+\sum_{uv\in E(K)}|b_{uv}+\sigma_{uv}t|.
\end{equation}
For $t\geq\max\{0,n-2\}$, this becomes
\begin{align}
 \Mo(K_t)&=F_n(t)+St+B,\label{eq:eventual}\\
 F_n(t)&=\left\lfloor\frac{t^2+(n-1)^2}{2}\right\rfloor.\label{eq:Fclosed}
\end{align}
\end{lemma}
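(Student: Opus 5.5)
We split the edge set of $K_t$ into the pendant edges $z_{j-1}z_j$ (with $z_0=r$) and the core edges $E(K)$, and compute each contribution directly from distances in $K_t$.

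\textbf{Pendant edges.} Take the edge $z_{j-1}z_j$ for $1\leq j\leq t$. Deleting it disconnects $K_t$, so every vertex is strictly closer to exactly one endpoint. The side containing $z_{j-1}$ consists of $V(K)$ together with $z_1,\dots,z_{j-1}$, so it has $n+j-1$ vertices. The other side, $z_j,\dots,z_t$, has $t-j+1$ vertices. The difference is $n+2j-2-t$, and summing absolute values over $j$ gives exactly $F_n(t)$ as in \eqref{eq:F}.

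\textbf{Core edges.} Take $uv\in E(K)$. Distances between vertices of $K$ are unchanged in $K_t$, since the path is pendant. For a path vertex $z_i$ we have $d(z_i,x)=i+\lambda(x)$. So $z_i$ is closer to $u$ than to $v$ exactly when $\lambda(u)<\lambda(v)$, and it is equidistant when the edge is horizontal. Hence all $t$ path vertices join the side of the endpoint of lower level, or neither side if $\sigma_{uv}=0$. The signed difference $n_u-n_v$ in $K_t$ is therefore $b_{uv}+\sigma_{uv}t$; note that $\sigma_{uv}=+1$ means $u$ has the lower level, which matches the sign. Taking absolute values and summing over the core edges proves \eqref{eq:attachment}.

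\textbf{Eventual form.} For a nonhorizontal edge oriented so that $\sigma_{uv}=1$, the vertex $u$ has lower level, so $r$ lies in the $u$-side. Since $v$ is counted on the $v$-side, we get $n_u\geq1$ and $n_v\geq1$, hence $|b_{uv}|\leq n-2$. Consequently $b_{uv}+t\geq0$ whenever $t\geq n-2$, and then $\sigma_{uv}(b_{uv}+\sigma_{uv}t)=\sigma_{uv}b_{uv}+t$. Summing over nonhorizontal edges gives $St$ plus the signed part of $B$. Horizontal edges contribute $|b_{uv}|$, so the total is $St+B$, which is \eqref{eq:eventual}.

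\textbf{Closed form for $F_n$.} The terms $n-t+2j-2$ for $j=1,\dots,t$ form an arithmetic progression with step $2$, running from $n-t$ to $n+t-2$. When $t\geq n-2$ this progression crosses zero. Split it at zero: the sum of the absolute values of the negative terms, plus the sum of the nonnegative terms, reduces to sums of consecutive integers of one parity. We then verify $\lfloor (t^2+(n-1)^2)/2\rfloor$ by considering the two parity cases of $n+t$. For example, when $n-t$ is even, the values are $\{0,2,\dots,t-n\}$ together with $\{0,2,\dots,n+t-2\}$.

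This parity check, including the boundary case $t=n-2$, where the smallest term is $-(n-2)+\dots$, is routine but is the fiddliest step. For $t<n$ some terms may all be positive, but the hypothesis $t\geq n-2$ ensures that the smallest term $n-t\leq2$, so the split formula still applies, with the negative part possibly empty. We confirm this by checking $t=n-2$, $n-1$ and $n$ directly.
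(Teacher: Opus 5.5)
Your proposal is correct and follows the paper's proof step for step: bridge cuts give the $F_n(t)$ summands, each path vertex joins the lower-level endpoint of a core edge or ties on a horizontal one, $|b_{uv}|\le n-2$ stabilizes the nonhorizontal terms, and the closed form is a direct summation with $t=n-2$ handled separately. The paper gets the closed form by flipping the signs of the first $j_0=\lfloor(t-n+1)/2\rfloor$ terms in the signed total $t(n-1)$, which is the same computation as your split by parity.

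Two small repairs are needed. First, the progression does not cross zero for $t\in\{n-2,n-1\}$; at $t=n-2$ its terms are $2,4,\dots,2(n-2)$, all positive, summing to $(n-2)(n-1)$. Second, you should actually write out the two parity evaluations:
\begin{itemize}
\item for $n+t$ odd and $t\ge n-1$, the sum is $\bigl((t-n+1)^2+(t+n-1)^2\bigr)/4=(t^2+(n-1)^2)/2$;
\item for $n+t$ even and $t\ge n$, the sum is $\bigl((t-n)(t-n+2)+(t+n-2)(t+n)\bigr)/4=(t^2+(n-1)^2-1)/2$.
\end{itemize}
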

\begin{proof}
Deleting the new edge $z_{j-1}z_j$, with $z_0=r$, leaves components
of orders $n+j-1$ and $t-j+1$, giving its summand in $F_n(t)$.
Every new vertex favors the lower-level endpoint of an old edge,
or is tied if that edge is horizontal. This gives~\eqref{eq:attachment}.
Since $|b_{uv}|\leq n-2$, its nonhorizontal terms equal
$t+\sigma_{uv}b_{uv}$ once $t\geq n-2$; for $n=1$ there are no old edges.

For $t\geq n-1$, put $j_0=\lfloor(t-n+1)/2\rfloor$.
Reversing the signs of the first $j_0$ summands gives
\[
 F_n(t)=t(n-1)+2j_0(t-n+1-j_0)
       =\left\lfloor\frac{t^2+(n-1)^2}{2}\right\rfloor.
\]
At $t=n-2\geq0$, the sum is $(n-2)(n-1)$, which equals the same
floor. This proves the endpoint as well.
\end{proof}

\begin{theorem}[General pendant-path comparison]\label{thm:general}
Let $(K,r)$ be a rooted connected graph with $n$ vertices and $m$
edges. There are integers $\beta_0,\beta_1$ depending only on $(K,r)$
such that, for every integer $t\geq\max\{1,m\}$,
\begin{equation}\label{eq:comparison}
 \Mo(L(K_t))-\Mo(K_t)=\alpha(K,r)\,t+\beta_{t\bmod2},
\end{equation}
with slope
\begin{equation}\label{eq:nonnegative}
 \alpha(K,r)=-1+\sum_{v\ne r}(a_v-1)s_v
 +\sum_{uv\ \mathrm{horizontal}}(a_u+a_v).
\end{equation}
In particular $\alpha(K,r)\geq-1$, and:
\begin{enumerate}
\item if $K$ is not bipartite, then $\alpha(K,r)\geq1$ at every root,
so $\Mo(L(K_t))>\Mo(K_t)$ for every sufficiently large $t$;
\item the slope is $-1$ if and only if $K$ is bipartite and every
vertex with at least two predecessors has no successor;
\item the slope is $0$ if and only if $K$ is bipartite, exactly one
vertex has $a_v=2$ and $s_v=1$, and every other vertex with at least
two predecessors has no successor.
\end{enumerate}
\end{theorem}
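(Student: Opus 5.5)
The plan is to recognise $L(K_t)$ as another pendant-path attachment and apply Lemma~\ref{lem:path} twice. In $L(K_t)$ the pendant edges $e_j=z_{j-1}z_j$ form a path $e_1e_2\cdots e_t$. Its first vertex $e_1$ is adjacent exactly to the $\deg_K(r)$ edges of $K$ at $r$. Let $K'$ be $L(K)$ together with the vertex $e_1$ joined to those edges, and root it at $r'=e_1$. Then $L(K_t)=K'_{t-1}$, and $K'$ has $n'=m+1$ vertices. (For $m=0$ the graph $K_t$ is a path and everything is checked directly, so assume $m\geq1$.)

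By Lemma~\ref{lem:path} applied to $(K',r')$ with path length $t-1$, we need $t-1\geq\max\{0,n'-2\}=m-1$, which is exactly the cutoff $t\geq\max\{1,m\}$. This gives
\[
\Mo(L(K_t))=\Bigl\lfloor\tfrac{(t-1)^2+m^2}{2}\Bigr\rfloor+S'(t-1)+B'.
\]
Likewise $\Mo(K_t)=\lfloor (t^2+(n-1)^2)/2\rfloor+St+B$, valid since $t\geq m\geq n-1$. Subtracting the two floor terms, the quadratic parts cancel and the difference is $-t$ plus a constant depending only on $t\bmod 2$ (and on $m,n$). So the difference equals $(S'-S-1)t+\beta_{t\bmod2}$, with integer intercepts built from $B',B,S',m,n$. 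It remains to identify $S'-S-1$ with~\eqref{eq:nonnegative}.

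\textbf{Levels in $K'$.} This is the main bookkeeping step. In $K'$, an edge $uv$ of $K$ with $\min\{\lambda(u),\lambda(v)\}=k$ lies at distance $k+1$ from $e_1$. I would prove this by induction along a shortest path from $r$ to the lower endpoint: a walk in the line graph from $e_1$ to $uv$ projects to a walk in $K$ from $r$ to $\{u,v\}$, giving the lower bound.

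Consequently, at a vertex $v\neq r$:
\begin{itemize}
\item the $a_v$ edges to predecessors have level $\lambda(v)$;
\item the $s_v+h_v$ edges to successors or horizontal neighbours have level $\lambda(v)+1$.
\end{itemize}
Two edges of a simple graph share at most one vertex, so each edge of $L(K)$ is counted at a unique $v$. At $r$ all edges have the same level $1$, so they contribute only horizontal edges. The edges from $e_1$ contribute $\deg_K(r)$ nonhorizontal edges. Hence
\[
S'=\deg_K(r)+\sum_{v\neq r}a_v(s_v+h_v).
\]
Combined with $S=\deg_K(r)+\sum_{v\ne r}s_v$ and $\sum_{v}a_vh_v=\sum_{uv\text{ horizontal}}(a_u+a_v)$, this gives exactly~\eqref{eq:nonnegative}. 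Every summand there is a nonnegative integer because $a_v\geq1$ for $v\ne r$, so $\alpha\geq-1$.

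\textbf{The three consequences.}
\begin{enumerate}
\item A connected graph is bipartite iff no edge joins two vertices of equal BFS level from $r$: an odd cycle forces a horizontal edge, and conversely a horizontal edge closes an odd closed walk through $r$. So a non-bipartite $K$ has a horizontal edge contributing $a_u+a_v\geq2$, hence $\alpha\geq1$. Strict inequality for large $t$ then follows because the intercepts are fixed.
\item For bipartite $K$, the slope is $\alpha=-1+\sum_{v\ne r}(a_v-1)s_v$, a sum of nonnegative integers. So $\alpha=-1$ iff every product $(a_v-1)s_v$ vanishes.
\item $\alpha=0$ iff exactly one product equals $1$, i.e.\ exactly one vertex has $a_v=2$ and $s_v=1$, and all others vanish. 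If $K$ is non-bipartite, part~1 already rules out $\alpha\in\{-1,0\}$.
\end{enumerate}
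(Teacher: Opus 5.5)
Your proposal is correct and follows the paper's proof essentially step for step. You identify $L(K_t)$ as the line core $L(K+rz)$, rooted at the first pendant edge, with a path of length $t-1$ attached. You then apply Lemma~\ref{lem:path} to both cores at the cutoff $t\geq\max\{1,m\}$, and use $d(e_1,uv)=1+\min\{\lambda(u),\lambda(v)\}$ to get $S'=\deg_K(r)+\sum_{v\ne r}a_v(s_v+h_v)$. From there you read off the slope and the bipartite classification exactly as the paper does. Your separate treatment of $m=0$ is harmless but unnecessary, since the general argument already covers the case where both cores are single vertices.
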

Let $H=K+rz$, where $z$ is a new leaf, and put $J=L(H)$, rooted at
$p=rz$. This \emph{line core} has $m+1$ vertices, and $L(K_t)$ is $J$
with a pendant path of length $t-1$ at $p$, for $t\geq1$.
Write $S'=S(J,p)$ and $B'=B(J,p)$.
Figure~\ref{fig:mechanism} shows the two cores and the incident-edge count.

\begin{figure}[!ht]
\centering
\includegraphics[width=\linewidth]{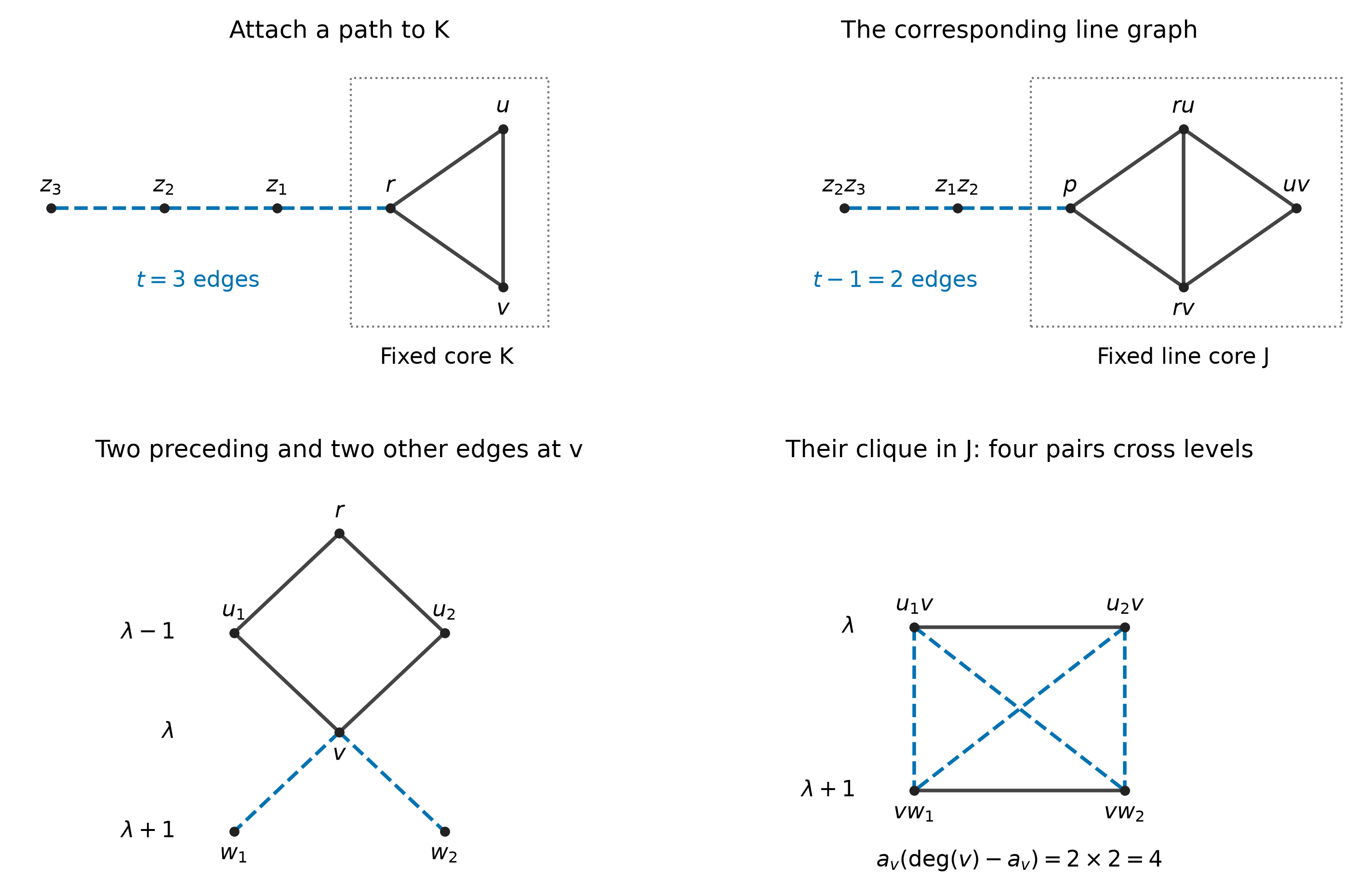}
\caption{Top: the two fixed cores, dotted, and the growing path, dashed;
its first edge becomes the line-core root $p$. Bottom: a vertex with
$a_v=2$ and two farther incident edges gives the four level-crossing
pairs counted in~\eqref{eq:sJ}; here $\lambda=\lambda(v)$.}
\label{fig:mechanism}
\end{figure}

\begin{proof}
For an edge $e=uv$ of $K$, viewed as a vertex of $J$,
\begin{equation}\label{eq:distance}
 d_J(p,e)=1+\min\{\lambda(u),\lambda(v)\}.
\end{equation}
A shortest path to the nearer endpoint supplies one inequality;
a line path to $e$ gives a walk to an endpoint and supplies the other.
Thus at $v\ne r$ the predecessor edges have level $\lambda(v)$ in $J$,
and the other incident edges have level $\lambda(v)+1$.
Each nonhorizontal pair contains one of each, while at $r$ only
pairs containing $p$ count. Distinct edges share at most one endpoint, so
\begin{equation}\label{eq:sJ}
 S'=\deg_K(r)+\sum_{v\ne r}a_v(\deg_K(v)-a_v).
\end{equation}
For $t\geq\max\{1,m\}$, Lemma~\ref{lem:path} applies to both
$(K,r)$ at length $t$ and $(J,p)$ at length $t-1$, since $m\geq n-1$.
This includes $m=0$, when both cores are single vertices.
Subtracting the two attachment formulas gives the exact identity
\begin{equation}\label{eq:generalexact}
 \Mo(L(K_t))-\Mo(K_t)
 =F_{m+1}(t-1)-F_n(t)+(S'-S)t+B'-B-S'.
\end{equation}
Let $\rho(q)\in\{0,1\}$ be the residue modulo two.
Expanding the floors in~\eqref{eq:Fclosed} gives
\[
 F_{m+1}(t-1)-F_n(t)
 =-t+\frac{1+m^2-(n-1)^2-\rho(t+m-1)+\rho(t+n-1)}{2}.
\]
The two residues depend only on the parity of $t$, so
\eqref{eq:generalexact} is~\eqref{eq:comparison} with
$\alpha(K,r)=S'-S-1$ and, for $\epsilon\in\{0,1\}$,
\begin{equation}\label{eq:beta}
 \beta_\epsilon=
 \frac{1+m^2-(n-1)^2-\rho(\epsilon+m-1)+\rho(\epsilon+n-1)}{2}
 +B'-B-S'.
\end{equation}
The slope $S'-S-1$ and both indices are integers, so
\eqref{eq:comparison} makes $\beta_0$ and $\beta_1$ integers as well.
Equations~\eqref{eq:sJ} and~\eqref{eq:sK} give
\begin{equation}\label{eq:alpha}
 \alpha(K,r)=\deg_K(r)-1+
 \sum_{v\ne r}a_v(\deg_K(v)-a_v-1).
\end{equation}
To obtain~\eqref{eq:nonnegative}, substitute $\deg_K(v)=a_v+h_v+s_v$
for $v\ne r$, $\deg_K(r)=s_r$, and $\sum_{v\ne r}a_v=\sum_v s_v$
from~\eqref{eq:sK}; this yields
\[
 \alpha(K,r)=-1+\sum_{v\ne r}(a_v-1)s_v+\sum_v a_vh_v.
\]
Each horizontal edge $uv$ contributes $a_u+a_v$ to the last sum, and
neither endpoint is the root, so this contribution is at least two.
All other summands are nonnegative integers.

There is no horizontal edge exactly when the even and odd levels form
a bipartition; conversely, in a bipartite graph the parity of the level
determines the part, so every edge joins different levels. Thus a
non-bipartite core has $\alpha\geq1$. The slope is $-1$ exactly when
there is no horizontal edge and $(a_v-1)s_v=0$ for every $v\ne r$. It is
$0$ exactly when there is no horizontal edge and the products
$(a_v-1)s_v$ sum to one, that is, precisely one of them equals one,
which forces $a_v=2$ and $s_v=1$ there, and all others are zero.
\end{proof}

\subsection{Reversal}
\begin{corollary}[Unique shortest paths]\label{cor:rooted}
If each vertex of $K$ has a unique shortest path from $r$, then
$\alpha(K,r)=2c-1$. In particular, every rooted tree has slope $-1$.
\end{corollary}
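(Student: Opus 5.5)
The plan is to use the slope formula~\eqref{eq:alpha} from Theorem~\ref{thm:general},
\[
 \alpha(K,r)=\deg_K(r)-1+\sum_{v\ne r}a_v(\deg_K(v)-a_v-1).
\]
A vertex $v\ne r$ with a unique shortest path from $r$ has exactly one predecessor. If it had two predecessors $u,u'$, then extending shortest paths to $u$ and to $u'$ would give two distinct shortest paths to $v$. Conversely, $a_v\geq1$ always holds for $v\ne r$. So the hypothesis gives $a_v=1$ for every $v\ne r$, and the formula simplifies to
\[
 \alpha(K,r)=\deg_K(r)-1+\sum_{v\ne r}(\deg_K(v)-2).
\]

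Next, evaluate this sum with the handshake lemma. We have $\sum_v\deg_K(v)=2m$, so
\[
 \sum_{v\ne r}(\deg_K(v)-2)=2m-\deg_K(r)-2(n-1).
\]
Hence $\alpha=2m-2n+2-1=2(m-n+1)-1=2c-1$.

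For trees, every vertex has a unique path from $r$, and $c=0$, so the slope is $-1$. Edge cases need a brief check. If $n=1$ and $m=0$, the formula gives $\deg_K(r)-1=-1=2\cdot0-1$. The derivation of~\eqref{eq:alpha} in Theorem~\ref{thm:general} holds for all $m$, so nothing breaks there.

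An alternative route goes through~\eqref{eq:nonnegative}. With $a_v=1$ everywhere, the first sum vanishes, and each horizontal edge contributes $2$. The count then has to match $c$ horizontal edges. That holds because the predecessor edges form a BFS spanning tree with $n-1$ edges, and, with unique predecessors, every non-tree edge must be horizontal. So there are $m-n+1=c$ horizontal edges, and this confirms $\alpha=-1+2c$. I would present the handshake computation as the main argument, since it avoids the extra combinatorial claim. The only point needing care is the equivalence between unique shortest paths and $a_v=1$. Only the forward direction is needed, and it is immediate, so I expect no real obstacle.
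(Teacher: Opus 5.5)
Your proof is correct and follows the same route as the paper: the hypothesis forces $a_v=1$ for every $v\ne r$, and then \eqref{eq:alpha} together with the degree sum gives $2m-2n+1=2c-1$. Your alternative check through \eqref{eq:nonnegative} is also valid: it shows there are exactly $c$ horizontal edges, each contributing $2$.
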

\begin{proof}
The hypothesis gives $a_v=1$ for every $v\ne r$. Thus~\eqref{eq:alpha}
and the degree sum give
$\alpha=\deg_K(r)-1+\sum_{v\ne r}(\deg_K(v)-2)=2m-2n+1=2c-1$.
\end{proof}

\begin{theorem}[Reversal at fixed cyclomatic number]
\label{thm:existence}
For each integer $c\geq1$, there are infinitely many pairwise
nonisomorphic cacti of cyclomatic
number $c$ and maximum degree three with $\Mo(L(G))>\Mo(G)$.
Specifically, a chain of $c$ triangles joined by bridges, rooted at
a degree-two vertex of an end triangle, has this property after
every path attachment of length $t\geq4c-1$.
\end{theorem}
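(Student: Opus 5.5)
The plan is to apply Theorem~\ref{thm:general} to a concrete core and then control the intercepts at the cutoff. Let $K=T_c$ be the chain of triangles $\Delta_1,\dots,\Delta_c$, where consecutive triangles are joined by a bridge between a vertex of $\Delta_i$ and a vertex of $\Delta_{i+1}$. In an interior triangle these two attachment vertices are distinct. The root $r$ is a degree-two vertex of $\Delta_1$ not used by the bridge. Then $n=3c$ and $m=3c+(c-1)=4c-1$, so the claimed threshold $t\geq 4c-1$ is exactly the cutoff $\max\{1,m\}$ of Theorem~\ref{thm:general}.

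\emph{Properties of the core.} The chain is a cactus and $\operatorname{cyc}=c$. Every vertex has degree at most three, and the root has degree three after the path is attached. Each triangle is entered through a single vertex, and its other two vertices lie at the same level, joined by a horizontal edge. Hence every vertex has a unique shortest path from $r$. Corollary~\ref{cor:rooted} then gives $\alpha=2c-1\geq1$. The graphs $K_t$ have distinct orders $3c+t$, so they are pairwise nonisomorphic and infinitely many.

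\emph{Reduction to two values of $t$.} By~\eqref{eq:comparison}, on each parity class the difference $\Mo(L(K_t))-\Mo(K_t)$ is affine with positive slope for $t\geq m$. It therefore suffices to show that it is positive at $t=m$ and at $t=m+1$, equivalently that $\alpha m+\beta_{m\bmod 2}>0$ and $\alpha(m+1)+\beta_{(m+1)\bmod2}>0$. I would compute $\beta_0,\beta_1$ from~\eqref{eq:beta}. This needs $S$, $B$, $S'$ and $B'$.

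\begin{itemize}
\item $S$ counts the nonhorizontal edges, so $S=m-c=3c-1$. Formula~\eqref{eq:sJ} gives $S'=\deg_K(r)+\sum_{v\neq r}(\deg_K(v)-1)$, which is a direct count.
\item For $B$, an edge in a tree-like cactus splits the vertices into blocks of "branch masses". The two nonhorizontal edges of $\Delta_i$ contribute $\sigma b=n-1-2(\text{mass beyond})$-type terms. The horizontal edge contributes the mass hanging past the exit vertex, namely $3(c-i)$, or $0$ for the last triangle. Each bridge contributes $3c-2\cdot3(c-i)$ with sign. Summing these arithmetic progressions gives $B$ as an explicit quadratic in $c$.
\item For $B'$ in the line core $J=L(K+rz)$, the line graph of a triangle chain is again a chain of triangles and $K_4^-$-like blocks glued along bridge vertices. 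The same branch-mass bookkeeping should give an explicit quadratic.
\end{itemize}

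\emph{Main obstacle.} I expect the hardest step to be the exact computation of $B'$. In $J$ the bridge edges become cut vertices shared by two triangles of line vertices, so some edges of $J$ are horizontal with nonzero imbalance. Orientation and sign conventions must be tracked carefully. My first attempt would be to organize the contributions of $J$ by the block of $J$ containing each edge, writing each $|b|$ or $\sigma b$ as a linear function of the masses on either side. I would then check the resulting closed forms of $\beta_0,\beta_1$ against direct calculation for $c=1,2$; for $c=2$, Figure~\ref{fig:reversal} gives $\alpha=3$, $\beta_0=11$, $\beta_1=10$.

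If the exact intercepts turn out messy, the fallback is only a lower bound. I would bound $B'-B-S'$ below by a quadratic in $c$ that is dominated by $\alpha m=(2c-1)(4c-1)\approx 8c^2$, together with the explicit floor terms $(1+m^2-(n-1)^2)/2\approx (7c^2)/2$. The positive quadratic terms should make positivity at $t=m,m+1$ clear for all $c\geq1$, with small $c$ checked directly.
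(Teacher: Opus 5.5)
Your route is the paper's: the same chain, slope $2c-1$ from Corollary~\ref{cor:rooted}, and intercepts from \eqref{eq:beta} through $S=3c-1$, $S'=5c-1$, $B$ and $B'$. The reduction to $t=m,m+1$ and the distinct-orders argument are fine, so the first assertion (infinitely many examples) is settled.

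The gap is the explicit threshold. Positivity for every $t\geq4c-1$ is carried entirely by the intercepts, and you stop before computing them. Two parts of your planned bookkeeping would also go wrong. Label the $i$th triangle $u_iv_iw_i$, with entry $u_i$, exit $v_i$, bridges $v_iu_{i+1}$ and $r=u_1$.

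\emph{The term for $u_iw_i$ in $B$.} Your template $n-1-2(\text{mass beyond})$ is right for $u_iv_i$, giving $6i-3c-3$. It fails for $u_iw_i$: the third vertex $v_i$ carries the rest of the chain, so $3(c-i)+1$ vertices are tied and the term is $3i-3$. With your horizontal and bridge terms, which are correct, this gives $B=3c(c-1)$.

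\emph{The structure of $J$.} The line core is not a chain of triangles. Put $A_i=u_iv_i$, $U_i=u_iw_i$, $Z_i=v_iw_i$, $Y_i=v_iu_{i+1}$ and $Y_0=p$. For $i<c$ the block is $A_i$ joined to every vertex of the path $Y_{i-1}U_iZ_iY_i$, which has seven edges. The last block is $K_4$ minus the edge $Y_{c-1}Z_c$, and consecutive blocks share $Y_i$. The horizontal edges of $J$ are $A_iU_i$ and $Z_iY_i$. The groups hanging at $Y_{i-1}$ and $Y_i$ have sizes $\phi_i=4i-3$ and $\psi_i=4(c-i)$, so each imbalance can be read off directly or via Lemma~\ref{lem:throughmass}. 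The block sums are $5\phi_i-\psi_i-4$ for $i<c$ and $4\phi_c-4$ for the last block. This gives $B'=8c^2-11c+3$ and $\beta_\epsilon=(17c^2-28c+9)/2+\theta_\epsilon$, which is exactly Proposition~\ref{prop:threshold}.

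Your fallback does not let you avoid this computation. At $t=m$ the difference equals $B'+(17c^2-18c+5)/2+\theta_{m\bmod 2}$. A generic estimate such as $|b_{xy}|\leq m-1$ on the $S'=5c-1$ nonhorizontal edges of $J$ gives only $B'\geq-(4c-2)(5c-1)$. That makes the bound negative for every $c\geq1$. The margin is also thin: for $c=1$ the difference is exactly $t-1$, so it is only $2$ at $t=3$. Any usable lower bound therefore has to essentially reproduce the exact block-by-block values above.
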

\begin{proof}
Join disjoint triangles $u_iv_iw_iu_i$, $1\leq i\leq c$,
by bridges $v_iu_{i+1}$ and attach the path at $u_1$.
This cactus has $n=3c$, $m=4c-1$, root degree two and maximum degree
at most three. Each vertex has a unique shortest path from the root, so
Corollary~\ref{cor:rooted} gives slope $2c-1>0$. Varying $t$ therefore
gives infinitely many counterexamples, all with cyclomatic number $c$ and
maximum degree three.
The explicit threshold follows from the exact formula in
Proposition~\ref{prop:threshold}.
\end{proof}
With Theorem~\ref{thm:general}(1), this is Theorem~\ref{main:reversal}.
The degree bound is optimal: a connected graph of maximum degree
two is a path or cycle; $L(C_k)\cong C_k$, $L(P_k)\cong P_{k-1}$,
and $\Mo(P_k)=\lfloor(k-1)^2/2\rfloor$ is nondecreasing.
For $c=1$ these graphs are the unicyclic family of
\cite[Theorem 2.5, Case I]{AlexIndulal}, whose computed difference
$|V(K_t)|-4$ equals the value $t-1$ of
Proposition~\ref{prop:threshold}; the existence statement is new for
$c\geq4$. The threshold $4c-1$ is where the exact formula begins; it is
sufficient and not claimed to be minimal.

Bipartite cores show both signs. The core $K_{2,c+1}$ has cyclomatic
number $c$ and slope $-1$ at every root, since every vertex with several
predecessors is a \emph{sink}, a vertex with no successor; it gives
infinitely many graphs with
$\Mo(L(G))<\Mo(G)$ at each $c\geq1$. The cube $Q_3$ has slope $2$ at
every root, from its three vertices with $(a_v,s_v)=(2,1)$.

\subsection{Cutoff and parity}
\begin{proposition}[Sharp cutoff and preceding boundary]\label{prop:cutoff}
For a rooted connected core with $m\geq2$ edges,
formula~\eqref{eq:comparison} also holds at $t=m-1$ exactly when
$r$ is a leaf; otherwise the actual difference is two smaller.
The uniform cutoff $t\geq m$ is sharp for each $m\geq2$.
\end{proposition}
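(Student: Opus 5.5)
The plan is to redo the subtraction in the proof of Theorem~\ref{thm:general} at $t=m-1$, and to track exactly where Lemma~\ref{lem:path} stops applying to the line core. On the $K$ side nothing changes. Since $m\geq n-1$, we have $t=m-1\geq n-2$, so \eqref{eq:eventual} and \eqref{eq:Fclosed} hold for $\Mo(K_t)$.

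On the $J$ side, $L(K_t)$ is $J$ with a pendant path of length $t-1=m-2\geq0$. Here $J$ has $N=m+1$ vertices, so the length is $N-3$, one below the threshold $N-2$ of Lemma~\ref{lem:path}. I will therefore start from the exact identity \eqref{eq:attachment} for $(J,p)$ and measure two defects against the eventual expression $F_N(t-1)+S'(t-1)+B'$.

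The first defect comes from the closed form \eqref{eq:Fclosed}. At length $N-3$ every summand of $F_N$ is positive, so the sum is
\[
F_N(N-3)=\sum_{j=1}^{N-3}(2j+1)=(N-3)(N-1)=N^2-4N+3 .
\]
The floor expression instead gives $\lfloor((N-3)^2+(N-1)^2)/2\rfloor=N^2-4N+5$. This defect is always $-2$; it uses $N\geq3$, which is where $m\geq2$ enters.

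The second defect comes from the nonhorizontal terms $|b'_{uv}+\sigma'_{uv}(m-2)|$. Such a term equals $(m-2)+\sigma' b'$ unless $\sigma' b'=-(m-1)$. In that case the true value is $1$, while the formula gives $-1$, so the correction is $+2$. Since $|b'|\leq N-2=m-1$, this happens only when $|b'|=N-2$, that is, when one endpoint $v$ has $n_v=1$ and every other vertex is strictly closer to $u$. This forces $v$ to be a leaf of $J$, because any further neighbor of $v$ would be closer to $v$ or tied.

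I then sort the leaves $e$ of $J$ by position. If $e\ne p$, then $e$ is not the root, so its unique neighbor $f$ is its predecessor. Orienting the edge from $f$ to $e$ gives $\sigma'=+1$ and $b'=(N-1)-1=m-1>0$, so there is no correction. If the leaf is $p$ itself, then its neighbor lies at level $1$ and $\sigma' b'=1-(N-1)=-(m-1)$, which gives a correction of $+2$. Now $p=rz$ is a leaf of $J=L(H)$ exactly when $rz$ meets only one other edge of $H$, that is, when $\deg_K(r)=1$. At most one such term exists.

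Combining the two defects, the actual difference at $t=m-1$ equals the formula value minus $2$, plus $2$ if $r$ is a leaf. This is the claimed dichotomy.

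For sharpness at each $m\geq2$, I exhibit a core whose root is not a leaf, for instance the path $P_{m+1}$ rooted at an interior vertex. For it the formula fails at $t=m-1\geq1$, so no uniform cutoff below $m$ works.

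The main obstacle I expect is the leaf-characterization step: showing that $|b'|=N-2$ forces a leaf endpoint with no ties, and then getting the sign of $\sigma'b'$ right in each case. That is where the dependence on whether $r$ is a leaf appears. Everything else is routine bookkeeping in \eqref{eq:generalexact}.
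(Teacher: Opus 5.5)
Your proposal is correct and follows the paper's proof essentially step by step. The core side is already on its eventual branch. The line-core bridge sum $F_{m+1}(m-2)=m(m-2)$ falls two short of its floor form. The only compensating $+2$ comes from an edge whose outward signed imbalance is $-(m-1)$, which forces its lower endpoint to be the leaf $p$, i.e.\ $\deg_K(r)=\deg_J(p)=1$.

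Your sharpness witness, the path $P_{m+1}$ rooted at an interior vertex, works just as well as the paper's centre-rooted star $K_{1,m}$, since any core with $m$ edges and a non-leaf root suffices.
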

\begin{proof}
Since $m-1\geq n-2$,
the original core terms have stabilized and its bridge sum is on
the quadratic branch. The line-core bridge sum, however, satisfies
\[
 F_{m+1}(m-2)=m(m-2)
 =\left\lfloor\frac{(m-2)^2+m^2}{2}\right\rfloor-2.
\]
Orient each nonhorizontal edge $xy$ of $J$ away from $p$ and write
$b_{xy}$ for its signed imbalance. At line-path length $m-2$,
the actual contribution exceeds $m-2+b_{xy}$ only if
$b_{xy}=-(m-1)$, since $|b_{xy}|\leq m-1$; the excess is then two.
This extreme imbalance means that only $x$ is closer to $x$, and
all other vertices are closer to $y$. Thus $x$ has no neighbor
other than $y$: every other neighbor would be closer to $x$ or tied.
As $x$ is nearer to $p$, this forces $x=p$ and $\deg_J(p)=1$.
Conversely, if $p$ is a leaf, its unique edge has precisely this
imbalance. Horizontal edges contribute no correction.
Since $\deg_J(p)=\deg_K(r)$, the old line-core terms therefore add
two exactly when $r$ is a leaf. They cancel the bridge deficit in
that case; otherwise the difference is two below~\eqref{eq:comparison}.
This proves the boundary assertion. Centre-rooted stars $K_{1,m}$
then prove that the uniform cutoff is sharp for every $m\geq2$.
\end{proof}
Together with Theorem~\ref{thm:general}, this is Theorem~\ref{main:law}.

\begin{corollary}[Parity obstruction]\label{cor:parity}
The intercepts satisfy
\begin{equation}\label{eq:paritydifference}
 \beta_1-\beta_0=
 \begin{cases}
 0,&c\text{ odd},\\
 (-1)^{n-1},&c\text{ even}.
 \end{cases}
\end{equation}
Equality for every sufficiently large $t$ is equivalent to
$\alpha=\beta_0=\beta_1=0$, and requires odd $c$.
\end{corollary}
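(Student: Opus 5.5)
From \eqref{eq:beta}, the core quantities $B'$, $B$ and $S'$ do not depend on $\epsilon$. Subtracting the two instances therefore leaves only the parity terms:
\[
 \beta_1-\beta_0=\frac{-\rho(m)+\rho(m-1)+\rho(n)-\rho(n-1)}{2}.
\]
I would evaluate this with the identity $\rho(q)-\rho(q-1)=(-1)^{q-1}$, which gives
\[
 \rho(m-1)-\rho(m)=(-1)^{m},\qquad \rho(n)-\rho(n-1)=(-1)^{n-1}.
\]
Hence $\beta_1-\beta_0=\bigl((-1)^m+(-1)^{n-1}\bigr)/2$.

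Since $c=m-n+1$, the parity of $m$ agrees with that of $n-1+c$, so $(-1)^m=(-1)^{n-1}(-1)^c$. For odd $c$ the two signs cancel and the difference is $0$. For even $c$ they agree and the difference is $(-1)^{n-1}$. This proves \eqref{eq:paritydifference}. The only point needing care is the sign bookkeeping in $\rho(\epsilon+m-1)$ and $\rho(\epsilon+n-1)$ at $\epsilon=0,1$. As a check I would use the two-triangle chain of Figure~\ref{fig:reversal}: there $c=2$ and $n=6$, so the formula predicts $(-1)^5=-1$, matching $\beta_1-\beta_0=10-11$.

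For the equivalence, suppose $\Mo(L(K_t))=\Mo(K_t)$ for all large $t$. By Theorem~\ref{thm:general}, the difference equals $\alpha t+\beta_{t\bmod 2}$ for $t\geq\max\{1,m\}$. If it vanishes along all sufficiently large even $t$, then the affine function $\alpha t+\beta_0$ has infinitely many zeros, so $\alpha=0$ and $\beta_0=0$. The odd $t$ give $\beta_1=0$ in the same way. Conversely, if $\alpha=\beta_0=\beta_1=0$, then the difference is zero for every $t\geq\max\{1,m\}$.

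Finally, $\beta_0=\beta_1$ forces $\beta_1-\beta_0=0$. For even $c$ the difference is $\pm1\neq0$, so equality can hold only when $c$ is odd.
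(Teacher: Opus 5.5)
Your proposal is correct and follows essentially the same route as the paper: $\beta_1-\beta_0$ comes only from the two parity residues in \eqref{eq:beta}. You evaluate them explicitly via $\rho(q)-\rho(q-1)=(-1)^{q-1}$, where the paper simply observes that they coincide for odd $c$ and are complementary for even $c$; the equivalence then follows, as in the paper, by applying \eqref{eq:comparison} separately on each parity class.
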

\begin{proof}
In~\eqref{eq:beta}, the two residues coincide for odd $c$.
For even $c$ they are complementary, and their half-difference
changes by $1-2\rho(n-1)=(-1)^{n-1}$ between the two parities.
Apply~\eqref{eq:comparison} on each parity class.
\end{proof}
This gives the ``only if'' direction of Theorem~\ref{main:equality}.
Gao, Xu and Do\v{s}li\'{c}~\cite[Section 4]{GaoIrregularity} give earlier
parity results for $\Mo(G)$ on cacti; Corollary~\ref{cor:parity}
concerns the line-graph difference along a pendant-path family.

\section{Branch masses and the intercept}\label{sec:intercepts}\label{subsec:throughmass}
At zero slope, the intercepts decide equality. A square with a leaf
opposite the root has $\alpha=\beta_0=\beta_1=0$, giving equality for
every $t\geq5$ (Proposition~\ref{prop:leafpath}). We compute these
constants for larger cores using branch masses.

The next lemma expresses line-graph imbalances as differences of edge
counts in the original graph, providing the intercept reduction in
Lemma~\ref{lem:massconstant}.

For an edge $g=xy$ of a connected graph $H$, let $d_H(g,a)$ be the
smaller of $d_H(x,a)$ and $d_H(y,a)$. For adjacent vertices $v,u$ define
the \emph{through-mass}
\[
 M_v(u)=1+\bigl|\{g\in E(H)\setminus\{vu\}:
                         d_H(g,u)<d_H(g,v)\}\bigr|.
\]
It counts the edges reached from $v$ through $u$, including $vu$.
Write $\nu(e,f)$ for the number of vertices of $L(H)$ nearer $e$ than $f$.

\begin{lemma}[Through-mass identity]\label{lem:throughmass}
For distinct incident edges $e=vu$ and $f=vw$ of every connected graph $H$,
\[
 \nu(e,f)-\nu(f,e)=M_v(u)-M_v(w).
\]
\end{lemma}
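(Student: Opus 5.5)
The plan is to reduce everything to distances in $H$ by using the line-graph distance formula already used in~\eqref{eq:distance}. For distinct edges $e,g$ of $H$, the distance $d_{L(H)}(e,g)$ equals $1$ plus the smallest $H$-distance between an endpoint of $e$ and an endpoint of $g$. With the notation $d_H(g,a)$ of the statement, this reads $d_{L(H)}(e,g)=1+\min\{d_H(g,v),d_H(g,u)\}$ for $e=vu$. It also holds formally for $g=e$ if we read the distance as $0$ there; that case is treated separately below.

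\textbf{Step 1: classify each $g\ne e,f$.} Write $D_a=d_H(g,a)$. Then $g$ is nearer $e$ than $f$ exactly when $\min\{D_v,D_u\}<\min\{D_v,D_w\}$. This is equivalent to $D_u<D_v$ together with $D_u<D_w$.

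The key observation is that $a\mapsto d_H(g,a)$ is $1$-Lipschitz along edges of $H$, being a minimum of two graph distances. Hence $D_u<D_v$ forces $D_u=D_v-1$. Since also $D_w\geq D_v-1$, the condition $D_u<D_w$ then holds iff $D_w\geq D_v$. Define $U(g)$ to mean $D_u<D_v$ and $W(g)$ to mean $D_w<D_v$. Then $g$ is nearer $e$ than $f$ iff $U(g)$ holds and $W(g)$ fails, and symmetrically $g$ is nearer $f$ iff $W(g)$ holds and $U(g)$ fails.

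\textbf{Step 2: count.} Among $g\ne e,f$, the difference $\#\{U\wedge\neg W\}-\#\{W\wedge\neg U\}$ equals $\#\{U\}-\#\{W\}$, since the pairs where both hold cancel. The vertices $e$ and $f$ of $L(H)$ themselves contribute $+1$ to $\nu(e,f)$ and $+1$ to $\nu(f,e)$, so they cancel.

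On the other side, $M_v(u)-M_v(w)=\#\{g\ne e: U(g)\}-\#\{g\ne f: W(g)\}$, because the two leading $1$'s cancel. Here $U(f)$ fails since $d_H(f,v)=0$, and $W(e)$ fails for the same reason. So this difference is also $\#\{U\}-\#\{W\}$ over $g\ne e,f$, which proves the identity.

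\textbf{Main obstacle.} The only delicate point is the Lipschitz step that turns "strictly nearer $e$" into the clean condition "$U$ and not $W$". I would verify it directly: a shortest path from an endpoint of $g$ to $v$, extended by the edge $vu$, bounds $D_u\leq D_v+1$, and the reverse bound holds symmetrically. The bookkeeping for $g\in\{e,f\}$ is the other point requiring care, and it is handled by the check above.
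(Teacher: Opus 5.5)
Your proposal is correct and matches the paper's proof essentially step for step: the line-graph distance formula, the adjacency bound $d_H(g,u)\geq d_H(g,v)-1$ that reduces ``nearer $e$ than $f$'' to ``reached through $u$ but not through $w$'', and cancellation of the common part after checking that $f$ is never reached through $u$ (nor $e$ through $w$). The only difference is that the paper gives a one-line justification of the general formula $d_{L(H)}(g,e)=1+\min\{d_H(g,u),d_H(g,v)\}$, whereas you cite \eqref{eq:distance}, which is only the special case at $p$ but is proved by the same two inequalities.
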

\begin{proof}
For distinct edges $g,e$, shortest paths between their nearest endpoints
give $d_{L(H)}(g,e)=1+\min\{d_H(g,u),d_H(g,v)\}$. Conversely, a line
path of length $j$ gives a walk of length $j-1$ between endpoints,
so equality holds. Let $g\notin\{e,f\}$. Adjacency gives
$d_H(g,u)\geq d_H(g,v)-1$. Thus $g$ is nearer $e$ than $f$ exactly when
$d_H(g,u)<d_H(g,v)$ and $d_H(g,w)\geq d_H(g,v)$.
With $\mathcal B(u)=\{g\ne vu:d_H(g,u)<d_H(g,v)\}$, this says
$\nu(e,f)=1+|\mathcal B(u)\setminus\mathcal B(w)|$; the one counts $e$,
and $f\notin\mathcal B(u)$ since $d_H(f,v)=0$. Subtract the symmetric
identity. The common part cancels, leaving the claimed difference.
\end{proof}

From now on all masses are evaluated in $H=K+rz$, and vertex sums
range over $V(K)$. Let $\mathcal E^+(v)$ be the successor edges of $K$, those joining
$v$ to level $\lambda(v)+1$, and for $f=vu$ abbreviate $M_v(f)=M_v(u)$.
In the mass computations, $p=rz$ is treated as the root's predecessor
edge, never as a successor.
The \emph{pair-minimum sum} is
\begin{equation}\label{eq:pairmin}
 \Pi=\sum_v\ \sum_{\{f,f'\}\subset \mathcal E^+(v)}\min\{M_v(f),M_v(f')\}.
\end{equation}
At maximum degree three, a vertex other than the root has at most two
successors, so it contributes at most one pair minimum. Write
$\theta_\epsilon=(\rho(\epsilon+n-1)-\rho(\epsilon+m-1))/2$
for the parity term in~\eqref{eq:beta}; it vanishes when $c$ is odd.

\subsection{One cycle}
Let $(K,r)$ be a unicyclic core. An odd cycle makes $K$ non-bipartite,
so Theorem~\ref{thm:general}(1) gives positive slope. Write an even
cycle as $x_0\cdots x_{2k-1}x_0$, with subscripts modulo $2k$ and
$x_0$ nearest $r$, and let
$\eta=d_K(r,x_0)$. Its entry antipode $x_k$ is then the only vertex
with two predecessors, so Theorem~\ref{thm:general} gives
\begin{equation}\label{eq:unicyclicslope}
 \alpha(K,r)=\deg_K(x_k)-3 .
\end{equation}
Thus degree two gives eventual strict inequality $\Mo(L(K_t))<\Mo(K_t)$,
degree at least four gives reversal, and degree three is exactly the
zero-slope case. Let $W_i$ be the number of edges of the tree attached
at $x_i$, counted in $K$, and let $s_i^{\mathrm{off}}$ count the successors of $x_i$
outside the cycle. Put $R=\sum_{0<i<k}W_i$ and
$R'=\sum_{k<i<2k}W_i$. These include arbitrary trees, not only paths.

\begin{proposition}[Unicyclic intercept formula]\label{prop:unicyclicconstant}
If the unique cycle is even and $\deg_K(x_k)=3$, then
\begin{equation}\label{eq:unicyclicconstant}
 \beta_0=\beta_1=2(n-k-\eta)+|R-R'|+\sum_{i\ne k}s_i^{\mathrm{off}}W_{i+k}-2\Pi.
\end{equation}
Consequently, equality for every $t\geq n$ is equivalent to
$2\Pi=2(n-k-\eta)+|R-R'|+\sum_{i\ne k}s_i^{\mathrm{off}}W_{i+k}$.
\end{proposition}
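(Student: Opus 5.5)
We start from the intercept formula \eqref{eq:beta}. Since $K$ is unicyclic, $c=1$ is odd, so $\theta_\epsilon=0$ and $m=n$. The first fraction in \eqref{eq:beta} is then $(1+n^2-(n-1)^2)/2=n$, which gives $\beta_0=\beta_1=n+B'-B-S'$. The proof therefore reduces to computing $B'-B-S'$ for the line core $J=L(H)$ in terms of branch data. The final sentence of the proposition follows at once: $\alpha=0$ by \eqref{eq:unicyclicslope}, and Theorem~\ref{thm:general} applies for $t\geq m=n$.

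\textbf{Computing $S'$ and $B$.} By \eqref{eq:sJ} we have $S'=\deg_K(r)+\sum_{v\ne r}a_v(\deg_K(v)-a_v)$. Here $a_v=1$ except at $x_k$, where $a_{x_k}=2$ and $\deg=3$. Hence $S'=\deg_K(r)+\sum_{v\ne r}(\deg_K(v)-1)+2-1$, which is expressible through $n$ and the degrees. For $B$ there are no horizontal edges, since $K$ is bipartite. So $B=\sum\sigma_{uv}b_{uv}$ is a sum over edges oriented away from $r$ of $n_u-n_v$. For a bridge $uv$ whose far side has $w$ vertices, this is $n-2w$. For a cycle edge $x_ix_{i+1}$, the far side is computed by the standard antipodal count: the vertices strictly on one half of the cycle together with their trees, with ties on the antipodal vertex or edge. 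Since the cycle is even, each cycle edge has a unique antipodal edge, and each side is a half-cycle with its hanging trees.

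\textbf{Computing $B'$.} Every edge of $J$ joins two edges of $H$ sharing a vertex $v$. By Lemma~\ref{lem:throughmass}, its imbalance is $M_v(u)-M_v(w)$, and there are three kinds of edge. First, a nonhorizontal $J$-edge (predecessor edge at $v$ to successor edge) contributes a signed term $\pm(M_v(\text{pred})-M_v(\text{succ}))$. Second, a horizontal $J$-edge joining two successor edges at $v$ contributes $|M_v(f)-M_v(f')|=M_v(f)+M_v(f')-2\min$. Summed over pairs, these produce $-2\Pi$ plus a linear mass term. Third, the two predecessor edges at $x_k$ are also horizontal in $J$. Their imbalance is $|M_{x_k}(x_{k-1})-M_{x_k}(x_{k+1})|$, and it should yield $|R-R'|$: the masses on the two sides of $x_k$ differ by the tree edges on the two open arcs. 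Masses through a predecessor edge count all edges not beyond $v$, including $p$. Masses through a successor edge count the branch beyond it, and on the cycle they depend on antipodal cut-offs. This is the origin of the cross terms $s_i^{\mathrm{off}}W_{i+k}$: an off-cycle successor at $x_i$ forms pairs with the cycle successor at $x_i$, and that cycle mass includes the tree at the antipode $x_{i+k}$.

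\textbf{Main obstacle.} The remaining work is bookkeeping. We must sum the signed predecessor–successor terms and the linear parts of the successor–pair terms, then subtract $B+S'$, and show that everything collapses to $2(n-k-\eta)-n$ plus the stated cross terms. The hardest step is evaluating masses along the cycle. Each $M_{x_i}(x_{i\pm1})$ counts cycle edges up to the antipodal cut together with the trees $W_j$ on that arc. The half-tied edge at the antipode requires the convention $d_H(g,u)<d_H(g,v)$ to be applied carefully, and the root path of length $\eta$ together with the tree containing $r$ enters $M$ through $x_0$. To control this, we would treat the sum over cycle vertices as a telescoping sum in $i$, separately on the two arcs $0<i<k$ and $k<i<2k$. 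We would then verify the bridge contributions by a single induction on tree edges: removing a leaf edge should change both sides by the same amount. Finally, we would check the formula on the square with a leaf opposite the root, where it should give $\beta=0$.
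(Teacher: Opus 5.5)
Your reduction to $\beta_0=\beta_1=n+B'-B-S'$ is correct, and so are the three-way split of $J$-edges via Lemma~\ref{lem:throughmass}, the use of $|x-y|=x+y-2\min\{x,y\}$ to produce $-2\Pi$, and the identification of $|R-R'|$ with the gap between the two predecessor masses at $x_k$ (they are $k+1+W_0+R$ and $k+1+W_0+R'$). But the proposal stops where the proposition's content lies: you never show how the linear mass terms of $B'$ cancel against $B$. This is not routine bookkeeping, because two ideas are missing.

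\emph{The defect.} $B=\sum_f(n-2N_v(f))$ is built from far-side vertex counts $N_v(f)=n_u(f;K)$ in $K$, while $B'$ is built from edge masses in $H$. Comparing them needs the defect $\delta(f)=M_v(f)-N_v(f)$, which is the number of cycles beyond the successor edge $f$. Here $\delta(f)=1$ exactly on the $\eta$ bridges from $r$ to $x_0$, and $\delta(f)=0$ on every other successor edge, cycle edges included. This is the only source of $-2\eta$. The cycle masses see the root side only through its edge count $W_0$, so they cannot produce $\eta$.

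\emph{The overlap.} A predecessor mass is not ``all edges not beyond $v$''. At a cycle vertex $x_i$ with $i\ne k$, the predecessor edge and the successor cycle edge both reach the tree at the antipode $x_{i+k}$. Hence $M_v(e)=n+1+W_{i+k}-T_v$, where $T_v$ is the total successor mass at $v$.

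With these, each $v\ne x_k$ gives $B'_v-B_v=s_v(1+\Omega_v)-2\Pi_v-2\sum_f\delta(f)$. Here $\Omega_v$ is the antipodal overlap ($W_{i+k}$ at $x_i$, zero off the cycle) and $\Pi_v$ is the pair-minimum sum at $v$. Weighting the overlaps by $s_i^{\mathrm{off}}+1$, or by $s_0^{\mathrm{off}}+2$ at $x_0$, gives $\kappa+R+R'+2W_k$, where $\kappa=\sum_{i\ne k}s_i^{\mathrm{off}}W_{i+k}$. The vertex $x_k$ contributes $2+W_0-W_k+|R-R'|$. Summing and using $n-2k=W_0+W_k+R+R'$ gives $B'-B=2(n-k-\eta)+1+|R-R'|+\kappa-2\Pi$, which is the claim.

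Two smaller corrections. First, $S'=n+1$, as in \eqref{eq:unicyclicbeta}: at $x_k$ one has $a(\deg_K(x_k)-a)=2=\deg_K(x_k)-1$, so no correction is needed. Your ``$+2-1$'' gives $n+2$ and would shift the intercept by one. Second, $B$ involves no ties at all, since $K$ is bipartite.

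Your fallback induction on leaf edges would not bypass the computation. The root path and the edge $f^*$ at $x_k$ cannot be deleted under the hypotheses, so the base case still needs the antipodal mass evaluation.

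The paper proves the proposition by specializing Theorem~\ref{thm:cactusconstant} to $c=1$, $D(C)=\eta$, $\mu^*=W_k$, $E^*=W_0+1$. The overlap $\Omega_v$ and defect $\delta(f)$ above are exactly the ingredients of Lemma~\ref{lem:massconstant}, on which that theorem rests.
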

The proof follows the general formula in Section~\ref{subsec:cactus}.

In~\eqref{eq:unicyclicconstant}, the \emph{size} term $2(n-k-\eta)$,
the \emph{arc imbalance} $|R-R'|$ and the \emph{antipodal coupling}
$\sum_{i\ne k}s_i^{\mathrm{off}}W_{i+k}$ balance against $2\Pi$,
whose branch masses retain the sizes of entire trees.
In the notation of Section~\ref{sec:path}, the same intercept satisfies
\begin{equation}\label{eq:unicyclicbeta}
 \beta_0=\beta_1=B'-B-1,
\end{equation}
since $S=n$ and $S'=n+1$.

\begin{proposition}\label{prop:leafpath}
Let $k\geq2$, $a\geq0$, and $\ell\geq1$ be integers. Form a rooted
core $(K,v_0)$ from $C_{2k}=v_0v_1\cdots v_{2k-1}v_0$ by attaching
$a$ leaves at $v_0$ and a pendant path of length $\ell$ at $v_k$.
Then $\alpha(K,v_0)=0$ and, for every integer $t\geq2k+a+\ell$,
\begin{equation}\label{eq:leafpath}
 \Mo(L(K_t))-\Mo(K_t)=a(\ell-1-a).
\end{equation}
In particular, equality holds for all these lengths exactly when
$a=0$ or $\ell=a+1$. Moreover every integer $q$ occurs as this
eventual difference at each fixed cycle length $2k$: take
$(a,\ell)=(1,q+2)$ if $q\geq0$ and $(a,\ell)=(-q,-q)$ if $q<0$.
\end{proposition}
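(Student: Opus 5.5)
The plan is to obtain everything from the two unicyclic results already stated: \eqref{eq:unicyclicslope} for the slope, and Proposition~\ref{prop:unicyclicconstant} for the intercept. The core $K$ is unicyclic, so $c=1$. Counting gives $n=m=2k+a+\ell$. Since $c$ is odd, Corollary~\ref{cor:parity} gives $\beta_0=\beta_1$. Theorem~\ref{thm:general} then gives the exact formula for every $t\geq\max\{1,m\}=2k+a+\ell$, which is the range claimed in the statement.

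For the slope, root at $v_0$. The cycle is even, its entry vertex is $x_0=v_0$, so $\eta=0$, and its antipode is $x_k=v_k$. The vertex $v_k$ has its two cycle neighbours and the first vertex of the pendant path. Since $\ell\geq1$, this gives $\deg_K(v_k)=3$, and \eqref{eq:unicyclicslope} yields $\alpha(K,v_0)=0$.

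Next I evaluate the terms of \eqref{eq:unicyclicconstant} with $x_i=v_i$.
\begin{itemize}
\item Trees on the cycle: $W_0=a$ and $W_k=\ell$, and all other $W_i$ vanish. Hence $R=R'=0$.
\item Size term: $2(n-k-\eta)=2(k+a+\ell)$.
\item Antipodal coupling: the only off-cycle successors at an index $i\neq k$ are the $a$ leaves at $v_0$. So $s_0^{\mathrm{off}}=a$, and the coupling is $s_0^{\mathrm{off}}W_k=a\ell$.
\end{itemize}

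The main step is the pair-minimum sum $\Pi$, computed with masses in $H=K+v_0z$. Every vertex other than $v_0$ has at most one successor. This holds for $v_k$, whose only successor is on the pendant path, and for all cycle and path vertices. So only $v_0$ contributes to $\Pi$. Its successor edges go to the $a$ leaves and to $v_1$ and $v_{2k-1}$.
\begin{itemize}
\item A leaf has mass $M_{v_0}=1$.
\item For $v_1$, the edges strictly closer to $v_1$ than to $v_0$ are the $k-1$ edges $v_1v_2,\dots,v_{k-1}v_k$ and the $\ell$ pendant-path edges at $v_k$.
\item The edge $v_kv_{k+1}$ is tied, since both $v_0$ and $v_1$ are at distance $k-1$ from it. The edges on the other arc and the edge $p$ are closer to $v_0$.
\end{itemize}
Thus $M_{v_0}(v_1)=1+(k-1)+\ell=k+\ell$, and by symmetry the same holds for $v_{2k-1}$. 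This tie check is the step I expect to need the most care; I would confirm it on the square, $k=2$.

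Summing minima over pairs of successor edges at $v_0$:
\begin{itemize}
\item leaf–leaf pairs give $\binom a2$;
\item leaf–cycle pairs give $2a$;
\item the single cycle–cycle pair gives $k+\ell$.
\end{itemize}
Hence $2\Pi=a^2+3a+2k+2\ell$. Substituting into \eqref{eq:unicyclicconstant} gives
\[
\beta=2(k+a+\ell)+a\ell-(a^2+3a+2k+2\ell)=a(\ell-1-a),
\]
which is \eqref{eq:leafpath}.

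The remaining claims are elementary. The product $a(\ell-1-a)$ vanishes exactly when $a=0$ or $\ell=a+1$. For the realisation of every integer $q$: $(a,\ell)=(1,q+2)$ gives $1\cdot(q+2-2)=q$ with $\ell\geq2$. For $q<0$, the choice $(a,\ell)=(-q,-q)$ gives $(-q)(-q-1+q)=q$ with $\ell\geq1$. Both choices satisfy the hypotheses $a\geq0$ and $\ell\geq1$ for every $k\geq2$.

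A fallback is needed because the proof of Proposition~\ref{prop:unicyclicconstant} is deferred to a later section. If it should not be assumed, I would instead compute $\beta=B'-B-1$ from \eqref{eq:unicyclicbeta} directly. This uses Lemma~\ref{lem:path} for the imbalances $b_{uv}$ in $K$, and Lemma~\ref{lem:throughmass} for the imbalances in $J$ via through-masses. The same mass table as above then does the work.
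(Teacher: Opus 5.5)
Your proposal is correct and follows the paper's proof essentially line for line: you apply Proposition~\ref{prop:unicyclicconstant} with $n=2k+a+\ell$, $\eta=R=R'=0$, coupling $a\ell$, and $\Pi=\binom a2+2a+k+\ell$, which comes only from the root, whose successor masses are $a$ ones and two copies of $k+\ell$. Your explicit check that $v_kv_{k+1}$ is tied in computing $M_{v_0}(v_1)$ fills in a detail the paper leaves implicit, while the appeal to Corollary~\ref{cor:parity} and the fallback via $B'-B-1$ are unnecessary, since the unicyclic intercept formula already gives $\beta_0=\beta_1$.
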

\begin{proof}
The antipode has degree three, so Proposition~\ref{prop:unicyclicconstant}
applies. Here $n=2k+a+\ell$, $\eta=R=R'=0$, and the coupling is $a\ell$.
Only the root has two or more successors: their masses are $a$ ones
and two copies of $k+\ell$. Thus
$\Pi=\binom a2+2a+k+\ell$. Substitution gives
$2(k+a+\ell)+a\ell-2\Pi=a(\ell-1-a)$.
\end{proof}

Inverse problems for $\Mo(G)$ itself are studied
in~\cite{InverseMostar,InverseUnicyclic,InverseCyclic,InverseBond}.

The \emph{vertex profile} is the multiset
\[
 \mathcal P(K,r)=\bigl\{\!\bigl\{
  (\lambda(v),a_v,h_v,s_v):v\in V(K)\bigr\}\!\bigr\},
\]
which fixes $n,m,S,S',\alpha$; even $(\mathcal P,B)$ does not fix the intercepts.

\begin{proposition}\label{prop:profileobstruction}
There are nine-vertex bipartite unicyclic cores $(K^{\mathrm{s}},0)$
and $(K^{\mathrm{o}},0)$ of maximum degree three and root degree two,
with zero slope, identical vertex profiles and $B=15$, but with
comparison differences $\Mo(L(K_t))-\Mo(K_t)$ equal to $2$ and $0$,
respectively, for every $t\geq9$.
\end{proposition}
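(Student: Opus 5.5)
The plan is to realize both cores inside the unicyclic zero-slope family and read off their intercepts from Proposition~\ref{prop:unicyclicconstant}, instead of computing Mostar indices of $L(K_t)$ directly. Concretely, I take an even cycle $x_0x_1\cdots x_{2k-1}x_0$ with the root $r=x_0$ on the cycle, so $\eta=0$, and with no off-cycle edge at $x_0$, so the root has degree two. I make the antipode $x_k$ of degree exactly three by hanging one pendant edge there. By~\eqref{eq:unicyclicslope} this gives $\alpha=0$, and since the unique cycle is even, both cores are bipartite. With nine vertices I intend to use $k=2$, a square $0,1,2,3$, which leaves $9-4-1=4$ further vertices to distribute as trees at $x_1$, $x_3$ and below. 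Keeping every degree at most three forces branching to happen one level down.

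The two cores will differ only in \emph{where} the same level-by-level material is hung, so that the multisets $(\lambda(v),a_v,h_v,s_v)$ agree. One candidate is to put a vertex of level two with two level-three children under $x_1$ in one core. In the other, the same kind of vertex is split or moved so that it sits opposite a successor. The sizes $R$, $R'$ then change, and so do the antipodal coupling $\sum_{i\ne k}s_i^{\mathrm{off}}W_{i+k}$ and the branch masses entering $\Pi$, while $n$, $k$, $\eta$ and the profile stay fixed. The symmetry $x_1\leftrightarrow x_3$ makes naive swaps isomorphic, so the real choice is which vertex carries the extra successor relative to the antipodal tree. I expect this to require a small finite search: enumerate rooted trees on the four spare vertices attached at $x_1,x_2,x_3$, keep the pairs with equal profiles, and read off the intercept of each.

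For each surviving pair I evaluate~\eqref{eq:unicyclicconstant}. The size term is $2(n-k-\eta)=2(9-2)=14$ in both cores. I then compute $|R-R'|$, the coupling sum, and $\Pi$ from~\eqref{eq:pairmin}, which at maximum degree three means one pair minimum per branching vertex plus the root's pair. Through-masses are computed in $H=K+rz$. The target is a pair with intercepts $2$ and $0$. Because $c=1$ is odd, Corollary~\ref{cor:parity} gives $\beta_0=\beta_1$. Theorem~\ref{thm:general} with cutoff $t\geq\max\{1,m\}=9$ then yields the constant differences for every $t\geq9$.

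Separately I verify $B=15$ for both cores by the direct sum~\eqref{eq:B}, listing $b_{uv}$ for the nine edges in each. As a consistency check I confirm $B'-B-1$ equals the computed intercept via~\eqref{eq:unicyclicbeta}. The main obstacle is the search step: producing two non-isomorphic cores with identical profiles \emph{and} identical $B$ that still separate in the coupling-versus-$\Pi$ balance. If $k=2$ fails, I would fall back to $k=3$ (a hexagon with three spare vertices) and redo the same enumeration.
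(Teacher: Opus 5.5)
\textbf{Genuine gap: the proposal never exhibits the two cores, and the case you try first cannot produce them.} Your tool is the right one. Reading both intercepts off Proposition~\ref{prop:unicyclicconstant} and concluding with Theorem~\ref{thm:general} at $m=9$ is exactly the paper's argument. But for an existence statement the construction \emph{is} the proof, and you defer it to an unperformed search.

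\textbf{Why $k=2$ fails.} Put the root, of degree two, on a square and take $\deg_K(x_2)=3$. The formula becomes $\beta=14+|W_1-W_3|+s_1^{\mathrm{off}}W_3+s_3^{\mathrm{off}}W_1-2\Pi$, subject to $W_1+W_2+W_3=5$, $W_2\geq1$, and at most one off-cycle edge at each of $x_1,x_3$. Up to the symmetry $x_1\leftrightarrow x_3$ there are nine triples, and evaluating each gives the following.
\begin{itemize}
\item The value $2$ occurs only for $(W_1,W_2,W_3)=(2,3,0)$ with a path below $x_2$, for $(2,2,1)$, and for $(3,1,1)$ with a branching vertex below $x_1$.
\item The value $0$ occurs only for $(2,3,0)$ with a branching vertex below $x_2$, for $(2,1,2)$, and for $(1,3,1)$ and $(0,5,0)$ with paths below $x_2$.
\end{itemize}
These seven vertex profiles are pairwise distinct. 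They differ in $s_{x_1},s_{x_3}$, or in the successor counts of the first tree vertices at levels two and three. So no admissible pair exists at $k=2$. This includes your own candidate of a level-two vertex with two children under $x_1$: its only same-profile partner is its mirror image. The structural reason is that on a square the antipode is the only cycle vertex at level two. There are therefore no two same-level cycle vertices on opposite arcs between which a leaf could be moved.

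\textbf{The missing construction.} You need exactly that move, which forces $k=3$. This is your fallback, but you leave it unsearched. Take the hexagon $0123450$ rooted at $0$, with leaves at $1$ and at the antipode $3$. Add a third leaf at $2$ for $K^{\mathrm{s}}$, or at $4$ for $K^{\mathrm{o}}$. Exchanging $2$ and $4$ matches the profile tuples. The size term is now $2(9-3)=12$, not the $14$ you wrote.
\begin{itemize}
\item For $K^{\mathrm{s}}$: the arc imbalance is $2$ and the coupling is $0$. The pair minima are $\min\{6,4\}=4$ at the root and $1$ at each of $1,2$, so $\Pi=6$ and $\beta=12+2-12=2$.
\item For $K^{\mathrm{o}}$: the arc imbalance is $0$ and the coupling is $2$. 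The pair minima are $5$ at the root and $1$ at each of $1,4$, so $\Pi=7$ and $\beta=12+2-14=0$.
\end{itemize}
For $B$, the outward signed imbalances on $01,12,23,05,54,43$ are $(-3,-1,1,1,-1,-3)$ in $K^{\mathrm{s}}$ and $(-1,\dots,-1)$ in $K^{\mathrm{o}}$. Each leaf edge contributes $7$, so $B=-6+21=15$ in both. Note that $B=15$ is part of the claim, so any search you run must impose it as a constraint, not merely check it afterwards.
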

\begin{proof}
Take the cycle $0123450$ and add leaves at $3$ and $1$. Add a third
leaf at $2$ for $K^{\mathrm{s}}$ and at $4$ for $K^{\mathrm{o}}$.
Exchanging vertices $2$ and $4$ matches their profile tuples; the
antipode $3$ has degree three in both. Along the outward cycle edges
$01,12,23,05,54,43$, the signed imbalances are, respectively,
$(-3,-1,1,1,-1,-3)$ and $(-1,-1,-1,-1,-1,-1)$. Each leaf edge
contributes $7$, so $B=-6+3\cdot7=15$.

In Proposition~\ref{prop:unicyclicconstant}, $n=9$, $k=3$, and $\eta=0$.
For $K^{\mathrm{s}}$, the arc imbalance is $2$, the coupling is zero,
and the only pair minima are $4$ at the root and $1$ at each of $1,2$.
Thus $\Pi=6$ and $\beta=12+2-12=2$. For $K^{\mathrm{o}}$, the arc
imbalance is zero, the coupling is $2$, and the pair minima are $5$
at the root and $1$ at each of $1,4$. Thus $\Pi=7$ and
$\beta=12+2-14=0$. Apply Theorem~\ref{thm:general}, since $m=9$.
\end{proof}
Thus the intercepts are not determined by the distance-level vertex
profile, even together with $B$; in particular, no formula depending
only on those data can parallel~\eqref{eq:nonnegative}.

\begin{figure}[!htb]
\centering
\includegraphics[width=\linewidth]{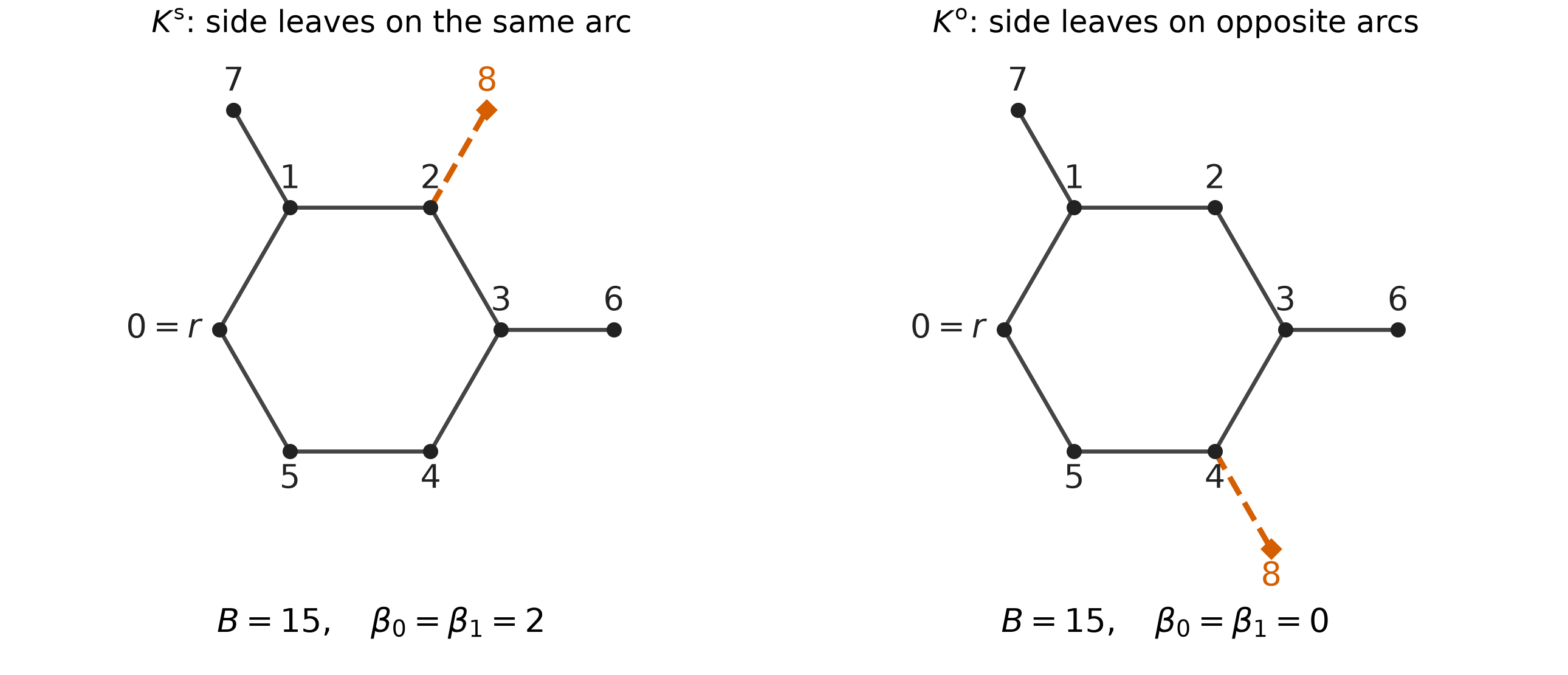}
\caption{The exact profile pair of Proposition~\ref{prop:profileobstruction}.
Moving leaf $8$ from $2$ to $4$ preserves the profile, $B$ and the sum
of arc imbalance and antipodal coupling. The smaller root-branch mass
rises by one, lowering the intercept from $2$ to $0$. The growing
root path is not shown.}
\label{fig:profileobstruction}
\end{figure}

\subsection{General cacti}\label{subsec:cactus}
The slope of every rooted cactus has a simple cycle-by-cycle form.
For related extremal Mostar problems on cacti see~\cite{HayatZhou}.

\begin{proposition}[Cactus slope]\label{prop:cactus}
For each even cycle $C$ of a rooted cactus $K$, let $w_C$ be its
vertex nearest the root and $w_C^*$ its antipode. Then
\begin{equation}\label{eq:cactusslope}
 \alpha(K,r)=2c-1+
 \sum_{C\ \mathrm{even}}\bigl(\deg_K(w_C^*)-4\bigr).
\end{equation}
If $o$ cycles are odd, then $\alpha(K,r)\geq2o-1$; if every cycle
is odd, then $\alpha(K,r)=2c-1$.
\end{proposition}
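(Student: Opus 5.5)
We start from formula \eqref{eq:alpha}, $\alpha(K,r)=\deg_K(r)-1+\sum_{v\ne r}a_v(\deg_K(v)-a_v-1)$, and organize the sum cycle by cycle. In a cactus, a vertex $v\ne r$ has $a_v\geq2$ only if it is the antipode $w_C^*$ of an even cycle $C$. Indeed, two predecessors of $v$ give two shortest paths to $r$, and their union contains a cycle through $v$. Since cycles share at most one vertex and $w_C$ is a cut vertex separating $C$ from $r$ (or $w_C=r$), the two shortest paths must stay inside that cycle from $w_C$ onward. This forces $v$ to be equidistant from $w_C$ along both arcs, so $C$ is even and $v=w_C^*$, with $a_v=2$. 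Every other non-root vertex has $a_v=1$. The first step is to record these predecessor counts.

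Next, substitute these values. For $a_v=1$ the summand is $\deg_K(v)-2$. For $v=w_C^*$ it is $2(\deg_K(v)-3)=(\deg_K(v)-2)+(\deg_K(v)-4)$. Hence
\[
\alpha=\deg_K(r)-1+\sum_{v\ne r}(\deg_K(v)-2)+\sum_{C\ \mathrm{even}}(\deg_K(w_C^*)-4).
\]
As in Corollary~\ref{cor:rooted}, the degree sum gives $\deg_K(r)-1+\sum_{v\ne r}(\deg_K(v)-2)=2m-2n+1=2c-1$, which is \eqref{eq:cactusslope}. Two points need checking. First, distinct even cycles have distinct antipodes: an antipode is never the nearest vertex of another cycle through it in a way that creates a second predecessor pair, and the argument above assigns each $v$ with $a_v=2$ to exactly one cycle. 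Second, $w_C^*\ne r$ holds automatically.

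For the inequalities, $\deg_K(w_C^*)\geq2$, so each even-cycle term is at least $-2$. With $o$ odd cycles and $c-o$ even ones, this gives $\alpha\geq2c-1-2(c-o)=2o-1$. If every cycle is odd, the correction sum is empty and $\alpha=2c-1$.

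The main obstacle is the structural claim that $a_v\geq2$ occurs only at even-cycle antipodes, with $a_v$ exactly $2$ there. I would prove it by taking two distinct predecessor edges at $v$, extending each to a shortest $r$--$v$ path, and letting $x$ be the last vertex where the paths coincide before diverging. The two subpaths from $x$ to $v$ form a cycle $C$, and the cactus block structure forces $x=w_C$. Equal lengths then make $C$ even with $v$ antipodal. A third predecessor would produce a second cycle through the two edges at $v$, sharing an edge with $C$, which is impossible in a cactus.
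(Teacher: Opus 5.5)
Your proposal is correct and follows the paper's proof essentially step for step. You identify the vertices with two predecessors as the entry antipodes of even cycles via the cactus structure, split the antipode summand of \eqref{eq:alpha} as $(\deg_K(v)-2)+(\deg_K(v)-4)$, and recover $2c-1$ from the degree sum.

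Two points should be stated explicitly. First, every even-cycle antipode $w_C^*$ really does have both its cycle neighbors as predecessors; this is immediate from shortest paths entering $C$ at $w_C$ and then following a cycle arc. Second, your distinctness argument is worded unclearly, and a cleaner version is available: combine that converse with your bound $a_v\le2$, or use the paper's ``unique preceding block'' argument.
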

\begin{proof}
The block structure gives each cycle a unique entry nearest the root.
A shortest path enters there and then follows a shortest cycle arc:
leaving the cycle would force a return at the same cut vertex.
Each nonroot vertex receives its predecessors through a unique block
on the route from the root. It therefore has two predecessors exactly
when it is the entry antipode of an even cycle, and one otherwise.
These antipodes are distinct, since that preceding block is unique.
In~\eqref{eq:alpha}, a vertex with one predecessor contributes
$\deg_K(v)-2$, and an antipode contributes
$2(\deg_K(v)-3)=(\deg_K(v)-2)+(\deg_K(v)-4)$.
The degree sum gives the baseline $2m-2n+1=2c-1$.
Each of the $c-o$ even-cycle corrections is at least $-2$, proving
the bound.
\end{proof}

Let $K$ be a zero-slope cactus, hence bipartite by Theorem~\ref{thm:general}.
Writing $d_C=\deg_K(w_C^*)\geq2$, the cactus slope formula gives
$\sum_C(d_C-2)=1$. Hence exactly one entry antipode has degree three
and all others have degree two; call its cycle $C^*$. These conditions
follow from zero slope; degrees elsewhere are unrestricted.

\paragraph{Cycle branches.}
Write each cycle as $C=x_0x_1\cdots x_{2k-1}x_0$, with entry $x_0$
nearest the root. For $i\ne0$, let $E_i^C$ count the edges in the
component of $K-E(C)$ containing $x_i$. Let $s_i^C$ count the successor
edges of $x_i$ outside $C$, and put, with subscripts modulo $2k$,
\[
 R_C=\sum_{0<i<k}E_i^C,\quad R'_C=\sum_{k<i<2k}E_i^C,
 \quad\kappa_C=\sum_{i\ne k}s_i^CE_{i+k}^C.
\]
The excluded index ensures that $E_0^C$ is not needed in this sum.

\paragraph{Cycle depth.}
Let $D(C)=N_{\rm br}(C)+2N_{\rm cyc}(C)$, where a shortest path
from $r$ to $x_0$ uses $N_{\rm br}(C)$ bridges and $N_{\rm cyc}(C)$
cycle edges. The value is independent of choices
between equal-length arcs.

\paragraph{The special cycle.}
If $C^*$ has length $2k^*$, write
\[
 \mu^*=E_{k^*}^{C^*},\qquad
 E^*=m+1-2k^*-\sum_{i\ne0}E_i^{C^*}.
\]
These count, respectively, the edges below the special antipode and
the edges of $H$ attached at the special cycle's entry.

\begin{theorem}[Cactus intercept formula]\label{thm:cactusconstant}
For every connected rooted cactus with zero slope,
\begin{equation}\label{eq:cactusconstant}
\begin{split}
 \beta_\epsilon={}&2(c-1)n+m+\frac{3c^2-8c+3}{2}+\theta_\epsilon-2\Pi\\
 &+\sum_C\bigl(\kappa_C+2\max\{R_C,R'_C\}-2D(C)\bigr)+\mu^*+E^*.
\end{split}
\end{equation}
The masses in $\Pi$ require only edge counts: a successor bridge has
mass one plus the edges beyond it, while a successor cycle edge
$x_ix_{i\pm1}$ has mass $k+\sum_{j=1}^kE_{i\pm j}^C$.
\end{theorem}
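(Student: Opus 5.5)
The plan is to start from the exact intercept expression \eqref{eq:beta}. With zero slope we have $S'=S+1$, and $S=\sum_v s_v$ equals the number of nonhorizontal edges, which is $m$ because a zero-slope core is bipartite. Hence
\[
 \beta_\epsilon=\tfrac{1+m^2-(n-1)^2}{2}+\theta_\epsilon+B'-B-(m+1).
\]
Substituting $n-1=m-c$ turns the first term into $\tfrac{1+2mc-c^2}{2}$. The work therefore reduces to computing $B'-B$ in cactus terms.

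I would compute $B$ and $B'$ separately.

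For $B$, every edge of $K$ is nonhorizontal, so $B=\sum\sigma_{uv}b_{uv}$. An outward bridge contributes $n-2N$, where $N$ is the number of vertices beyond it. An outward cycle edge $x_ix_{i+1}$ contributes a difference of the vertex counts on the two arcs it separates, with the antipodal branch tied for even cycles. I will express these counts through the branch edge counts $E_i^C$, using the cactus identity that a subcactus with $e$ edges and $c'$ cycles has $e-c'+1$ vertices.

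For $B'$, Lemma~\ref{lem:throughmass} rewrites every imbalance of $J=L(H)$ as a difference of through-masses $M_v(u)-M_v(w)$. The level structure from \eqref{eq:distance} sorts the edges of $J$ into three types:
\begin{itemize}
\item predecessor--successor pairs at a vertex $v$, which are oriented and contribute the signed difference $M_v(\text{pred})-M_v(\text{succ})$;
\item the two predecessor edges at an entry antipode, which give a horizontal pair;
\item successor--successor pairs at $v$, which are horizontal and contribute $|M_v(f)-M_v(f')|$.
\end{itemize}
For the third type I use $|x-y|=x+y-2\min\{x,y\}$. This is exactly where $-2\Pi$ enters, and the leftover $x+y$ parts combine with the signed sums.

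Summing over vertices, each predecessor mass appears $s_v$ times and each successor mass appears $a_v+s_v-1$ times. The predecessor mass at $v$ is $m+1$ minus the edges behind $v$, and the successor masses are given by the stated edge-count rules. The bridge rule is immediate from the definition. For a cycle edge, the mass counts the $k$ cycle edges of the arc from $x_{i\pm1}$ up to the antipode, plus the branches hanging off that arc, which are exactly $E_{i\pm j}^C$ for $j=1,\dots,k$.

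The main obstacle is the bookkeeping that collapses $B'-B$ into the stated terms. The cycle contributions should telescope into $2\max\{R_C,R'_C\}+\kappa_C$. The depth term $-2D(C)$ should come from the predecessor masses accumulated along the root path, with a bridge costing one and a cycle costing two, since entering a cycle passes two predecessor-edge levels. The special antipode of degree three supplies $\mu^*+E^*$.

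I would do this in stages:
\begin{enumerate}
\item Verify the computation on one even cycle with arbitrary trees attached, checking it against Proposition~\ref{prop:unicyclicconstant}, whose $2(n-k-\eta)$ must match $m+1+\ldots-2D$ with $c=1$.
\item Induct on the number of cycles by gluing a cycle at a cut vertex. Each additional cycle shifts all masses behind it by its own edge count, and this should produce the quadratic terms $2(c-1)n+\tfrac{3c^2-8c+3}{2}$ once $m=n-1+c$ is used.
\item Since the global constants are easy to get wrong, calibrate them against the examples already computed: Proposition~\ref{prop:leafpath}, Proposition~\ref{prop:profileobstruction}, and the two-triangle chain is not applicable (nonzero slope), so also a chain of two squares computed directly.
\end{enumerate}
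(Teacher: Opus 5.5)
Your reduction agrees with the paper up to \eqref{eq:massB}. Zero slope gives $S=m$ and $S'=m+1$, and Lemma~\ref{lem:throughmass} with $|x-y|=x+y-2\min\{x,y\}$ gives the vertex-by-vertex form of $B'$ containing $-2\Pi$. The gap is that everything the theorem asserts lies in the ``bookkeeping'' you defer, and the mechanisms you predict for it are missing or wrong.

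The first missing idea is the overlap. The reached sets of the neighbors of a vertex $v$ are not disjoint: on each even cycle through $v$, the edges hanging at the antipode of $v$ are reached through both cycle neighbors. So for $a_v\le1$ the predecessor mass is $M_v(e)=m+1+\Omega_v-\sum_{f\in\mathcal E^+(v)}M_v(f)$, not $m+1$ minus the successor masses. The terms $\kappa_C$, $2\max\{R_C,R'_C\}$ and $\mu^*+E^*$ come from four facts:
\begin{itemize}
\item $\sum_{a_v\le1}s_v\Omega_v=\sum_C(\kappa_C+R_C+R'_C+2E_k^C)$;
\item $\Omega_{v^*}=E^*$;
\item $M_{v^*}(f^*)=\mu^*$;
\item the predecessor-pair term at each entry antipode equals $|R_C-R'_C|$.
\end{itemize}
Without the overlap there is nothing for these terms to telescope from. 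Your description of $B$ also has an error. In a bipartite core no vertex is equidistant from the two ends of an edge, so there is no tied antipodal branch; each successor edge $f$ simply contributes $n-2N_v(f)$.

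The depth term is misidentified too. $D(C)$ charges two per cycle \emph{edge} on the route to the entry, not two per cycle. It does not come from predecessor masses. It comes from pairing, for each successor edge $f$, its mass $M_v(f)$ in $B'$ with its vertex count $N_v(f)$ in $B$. By the cactus identity you quote, the defect $\delta(f)=M_v(f)-N_v(f)$ is the number of cycles beyond $f$. Counting in the other order, a cycle $C$ is charged once for each bridge on its route. An earlier cycle exited at distance $j$ from its own entry charges it $2j$ times, since exactly $j$ successor edges on each arc have far halves containing the exit.

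The $c$-dependent constant also needs no induction on glued cycles. Each vertex with one predecessor contributes $s_v(m+1-n)=s_vc$ to $B'_v-B_v$, and $v^*$ contributes one more $c$. These total $cm$ because the successor counts sum to $S=m$. Then \eqref{eq:beta} gives $2(c-1)n+m+\frac{3c^2-8c+3}{2}$ directly.

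Finally, your calibration stage proves nothing, and it is circular here. Propositions~\ref{prop:unicyclicconstant}, \ref{prop:leafpath} and~\ref{prop:profileobstruction} are all derived in the paper from this very theorem.
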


Section~\ref{sec:scope} solves $\beta_\epsilon=0$ by balancing size and
cycle-arrangement terms against $2\Pi$. Appendix~\ref{app:cactusproof}
reduces arbitrary zero-slope cores to masses
(Lemma~\ref{lem:massconstant}), then evaluates them for cacti; only that
evaluation needs the cactus structure.

\begin{proof}[Proof of Proposition~\ref{prop:unicyclicconstant}]
The antipode is the only vertex with two predecessors, so
Theorem~\ref{thm:general} gives zero slope. In
Theorem~\ref{thm:cactusconstant}, $c=1$, $m=n$, $D(C)=\eta$,
$\mu^*=W_k$, $E^*=W_0+1$, and $\theta_\epsilon=0$.
Use $n-2k=W_0+W_k+R+R'$ and
$2\max\{R,R'\}=R+R'+|R-R'|$.
\end{proof}

\section{Equality constructions}\label{sec:scope}
For odd $c\geq3$, the bouquet gives equality cores of order $11c-20$
with unrestricted maximum degree. Binary branching achieves maximum
degree three with order $O(4^c)$, also giving one-parity equality at
even cyclomatic numbers.

\begin{lemma}[Decorated square bouquet]\label{lem:bouquet}
Let $c\geq1$, $a\geq0$, and $\ell\geq1$ be integers. Form $(K,r)$
from $c$ four-cycles sharing only $r$, with $a$ leaves at $r$ and a path of
length $\ell$ at the first cycle's antipode. We call this a
\emph{decorated square bouquet}. Then $n=3c+a+\ell+1$,
$m=4c+a+\ell$, $\alpha=0$, and
\begin{equation}\label{eq:bouquetbeta}
 \beta_\epsilon=(a+4c-4)(\ell-a+2c-3)
                 -\frac{(c-1)(17c-31)}2+\theta_\epsilon.
\end{equation}
\end{lemma}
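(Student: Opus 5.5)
The plan is to specialize the cactus results of Section~\ref{subsec:cactus} to the bouquet. Write the $j$th square as $r\,x_j\,y_j\,x_j'\,r$, so that $y_1$ carries the path of length $\ell$. The counts are immediate: there are $3c$ new vertices on the squares, $a$ leaves and $\ell$ path vertices, so $n=3c+a+\ell+1$; the edges are $4c+a+\ell$, and $c$ is the cyclomatic number. For the slope I will apply Proposition~\ref{prop:cactus}. Every cycle is even with entry $r$. The antipode $y_1$ has degree three and each $y_j$ with $j\ge2$ has degree two, so
\[
\alpha=2c-1+(3-4)+(c-1)(2-4)=0 .
\]
This also places us in the zero-slope setting, with special cycle $C^*$ the first square, and $k^*=2$.

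For the intercept I will evaluate each term of Theorem~\ref{thm:cactusconstant}.

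\emph{Cycle branches.} For every square, $E_1^C=E_3^C=0$, so $R_C=R'_C=0$. Also $E_2^C=\ell$ for $C^*$ and $0$ otherwise. The only nonzero term of $\kappa_C$ is $i=0$, namely $s_0^{C}E_2^C$. Here $s_0^{C^*}$ counts the successor edges of $r$ outside $C^*$, which number $a+2(c-1)$. Hence $\kappa_{C^*}=(a+2c-2)\ell$ and $\kappa_C=0$ for the other squares.

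\emph{Depth and special-cycle terms.} All depths vanish, $D(C)=0$, since every entry is the root. Next $\mu^*=\ell$. The edges of $H$ at the entry of $C^*$ are the $a$ leaves, the edge $p$ and the other $4(c-1)$ square edges, so $E^*=a+4c-3$; I will cross-check this against the defining expression $m+1-4-\ell$.

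\emph{Pair-minimum sum.} Only the root has two or more successors: $x_j,x_j'$ have one successor each, and $y_1$ has only the path edge. The successor masses at $r$ are $a$ ones for the leaves, $2(c-1)$ twos for the edges into the other squares (by the mass rule $k+\sum E=2$), and two copies of $2+\ell$ for $C^*$. Sorting these and summing pairwise minima gives
\[
\Pi=\binom a2+a\,(2c)+2\binom{2c-2}{2}+2\cdot 2(c-1)+(2+\ell).
\]
Here the $a$ ones are paired with everything above them, the twos are paired among themselves and with the two large masses, and the large pair contributes $2+\ell$. I will double-check the boundary cases $a=0$ and $c=1$.

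Substituting all of this into~\eqref{eq:cactusconstant}, with $\theta_\epsilon$ kept as is, should give a polynomial in $a,\ell,c$ that factors as in~\eqref{eq:bouquetbeta}. The main obstacle is this bookkeeping: the careful count of $\Pi$, in particular the pairs between the twos and the two large masses, and the ensuing algebra to reach the factored form $(a+4c-4)(\ell-a+2c-3)$ minus $\tfrac{(c-1)(17c-31)}2$. As sanity checks, at $c=1$ the formula must reduce to $a(\ell-1-a)$, in agreement with Proposition~\ref{prop:leafpath} for $k=2$. I will also test $c=2$, $a=0$ against a direct evaluation.
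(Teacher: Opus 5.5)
Your route is the paper's: specialize Theorem~\ref{thm:cactusconstant}. Your other inputs all agree with the paper:
\begin{itemize}
\item $R_C=R'_C=D(C)=0$, $\kappa_{C^*}=(a+2c-2)\ell$, $\mu^*=\ell$ and $E^*=4c+a-3$;
\item the root masses are $a$ ones, $2c-2$ twos and two copies of $\ell+2$;
\item only $r$ contributes to $\Pi$.
\end{itemize}
Reading the slope off Proposition~\ref{prop:cactus} instead of Theorem~\ref{thm:general}(3) is immaterial.

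The step that fails is your closed form for $\Pi$. Each of the $2c-2$ twos is paired with \emph{both} masses $\ell+2\geq3$, and each such pair has minimum $2$. These cross pairs therefore contribute $2\cdot2\cdot(2c-2)=8(c-1)$, not $2\cdot2(c-1)=4(c-1)$. The correct value is
\[
 \Pi=\binom a2+2ac+2\binom{2c-2}{2}+8(c-1)+\ell+2 .
\]

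With your value, $-2\Pi$ is too large by $8(c-1)$. Substituting into \eqref{eq:cactusconstant} would give the right side of \eqref{eq:bouquetbeta} plus $8(c-1)$, so the claimed factorization fails for every $c\geq2$.

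Your $c=1$ sanity check cannot catch this, because the discrepancy vanishes there. Your planned $c=2$, $a=0$ test would catch it. At $\ell=1$ the true intercepts are $\beta_0=7$ and $\beta_1=6$, while your $\Pi$ gives $15$ and $14$.

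With the corrected $\Pi$, substitution gives
\[
 -\frac{c^2}{2}-2ac+4c\ell+a\ell-a^2+4c+a-4\ell-\frac72+\theta_\epsilon,
\]
which is exactly the expansion of \eqref{eq:bouquetbeta}.
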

\begin{proof}
Exactly one antipode has a successor, so the slope is zero.
In Theorem~\ref{thm:cactusconstant}, all $R_C,R'_C,D(C)$ vanish;
only the special square has nonzero coupling, $(a+2c-2)\ell$.
Also $\mu^*=\ell$ and $E^*=4c+a-3$. The successor masses at $r$ are
$a$ ones, $2c-2$ twos, and two copies of $\ell+2$. No other vertex
has two successors, giving
\[
 \Pi=\binom a2+2ac+2\binom{2c-2}{2}+8(c-1)+\ell+2.
\]
Substitution gives
$-c^2/2-2ac+4c\ell+a\ell-a^2+4c+a-4\ell-7/2+\theta_\epsilon$,
which factors as stated.
\end{proof}

\begin{theorem}[Existence of eventual equality cores]\label{thm:oddequality}
Let $c\geq1$ be an integer. There exists a connected rooted core
$(K,r)$ of cyclomatic number $c$ such that
$\Mo(L(K_t))=\Mo(K_t)$ for every sufficiently large integer $t$
if and only if $c$ is odd.
For every odd $c\geq3$, one can choose a cactus core with $11c-20$
vertices and $12c-21$ edges, giving equality for every $t\geq12c-21$.
\end{theorem}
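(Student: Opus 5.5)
The ``only if'' direction is already contained in Corollary~\ref{cor:parity}. If $\Mo(L(K_t))=\Mo(K_t)$ for every sufficiently large $t$, then both parity classes give the same affine function, namely zero. So $\alpha=\beta_0=\beta_1=0$, and by~\eqref{eq:paritydifference} $\beta_1-\beta_0=\pm1$ when $c$ is even. Hence $c$ must be odd. For the ``if'' direction I will give explicit cores: one for $c=1$, and decorated square bouquets for odd $c\geq3$.

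For $c=1$, I take Proposition~\ref{prop:leafpath} with $k=2$, $a=0$ and $\ell=1$, that is, a square with a leaf opposite the root. Then $\alpha=0$ and~\eqref{eq:leafpath} gives difference $a(\ell-1-a)=0$ for every $t\geq5$.

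For odd $c\geq3$, I use Lemma~\ref{lem:bouquet}. Since $c$ is odd, $\theta_\epsilon=0$, so it suffices to choose integers $a\geq0$ and $\ell\geq1$ with
\[
 (a+4c-4)(\ell-a+2c-3)=\frac{(c-1)(17c-31)}{2}.
\]
The target order $11c-20=3c+a+\ell+1$ suggests imposing $a+\ell=8c-21$. The second factor then becomes $10c-24-2a$. Matching the first factor to $(17c-31)/2$ gives
\[
 a=\frac{9c-23}{2},\qquad \ell=\frac{7c-19}{2}.
\]
These are integers because $c$ is odd, and they satisfy $a\geq2$ and $\ell\geq1$ from $c=3$ onward. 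The second factor then equals $10c-24-(9c-23)=c-1$, so the product is exactly $(c-1)(17c-31)/2$ and $\beta_0=\beta_1=0$. The edge count is $m=4c+a+\ell=12c-21$. Theorem~\ref{thm:general} then gives difference $\alpha t+\beta=0$ for every $t\geq\max\{1,m\}=12c-21$.

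The bouquet is a cactus by construction. The only delicate step is finding parameters that make the quadratic intercept~\eqref{eq:bouquetbeta} vanish. The factorization of $(c-1)(17c-31)/2$ as $\tfrac{17c-31}{2}\cdot(c-1)$, together with the prescribed order, pins down $a$ and $\ell$. After that, the remaining work is checking integrality and positivity of $a$ and $\ell$, which is where oddness of $c$ and the bound $c\geq3$ are used.
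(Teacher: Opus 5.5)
Your proposal is correct and follows the paper's proof essentially verbatim: necessity comes from Corollary~\ref{cor:parity}, the case $c=1$ uses the square with a leaf opposite the root from Proposition~\ref{prop:leafpath}, and for odd $c\geq3$ you take the decorated square bouquet with $a=(9c-23)/2$ and $\ell=(7c-19)/2$, which matches the two factors of~\eqref{eq:bouquetbeta} to $(17c-31)/2$ and $c-1$. The only difference is cosmetic: you motivate the choice through the prescribed order $a+\ell=8c-21$, whereas the paper equates the two factors directly.
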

\begin{proof}
Necessity is Corollary~\ref{cor:parity}. For $c=1$, use
Proposition~\ref{prop:leafpath} with $k=2$, $a=0$, and $\ell=1$.

For odd $c\geq3$, the parity term in Lemma~\ref{lem:bouquet} vanishes.
To make the bouquet intercept zero, choose the two factors in
\eqref{eq:bouquetbeta} to satisfy
\[
 a+4c-4=\frac{17c-31}{2},\qquad \ell-a+2c-3=c-1.
\]
Solving gives
\[
 a=\frac{9c-23}{2},\qquad \ell=\frac{7c-19}{2}.
\]
Since $c\geq3$ is odd, these are integers with $a\geq2$ and $\ell\geq1$.
The core has $n=3c+a+\ell+1=11c-20$ and
$m=4c+a+\ell=12c-21$. Theorem~\ref{thm:general} now gives equality
for every $t\geq m=12c-21$.
\end{proof}
At $c=3$ this is a 13-vertex core with two root leaves and one far
leaf. Dobrynin~\cite[Proposition 11]{Dobrynin} obtained equality of
the Szeged indices of a bouquet of even cycles and its line graph; for
the Mostar index, eventual equality at both parities forces odd $c$.

\begin{remark}[Equality on one parity class]\label{rem:singleparity}
Even cyclomatic number permits equality on one eventual parity class.
Let $v^*$ be the vertex opposite the root $r$ on a square, and join
$v^*$ to a vertex $w$ of a second, disjoint square by a path
$v^*\xi w$; attach to $\xi$ a new vertex carrying two leaves.
This core has $c=2$, $n=12$, $m=13$,
maximum degree three and zero slope. In Theorem~\ref{thm:cactusconstant},
$\Pi=11+2+3+1=17$, $\sum_CD(C)=6$, $\mu^*=9$, $E^*=1$,
and every $\kappa_C,R_C,R'_C$ vanishes. Thus
$\beta_\epsilon=1/2+\theta_\epsilon=1-\epsilon$:
equality holds for odd $t\geq13$ and the difference is one for
even $t\geq14$. Theorem~\ref{thm:uniformequality} gives such
degree-three cores for every even $c$.
\end{remark}

\subsection{Equality with maximum degree three}
Binary branching keeps degrees at most three while accumulating pair
minima to offset the size term $2(c-1)n$ in~\eqref{eq:cactusconstant}.

\begin{theorem}[Uniform degree-three equality]
\label{thm:uniformequality}
For every integer $c\geq1$ there is a rooted cactus $(K,r)$ with
cyclomatic number $c$, maximum degree three and root degree two such
that, writing $n=|V(K)|$ and $m=|E(K)|$, for every integer $t\geq m$,
\[
 \Mo(L(K_t))-\Mo(K_t)=
 \begin{cases}
 0,&c\text{ odd},\\
 \bigl(1+(-1)^{n+t}\bigr)/2,&c\text{ even}.
 \end{cases}
\]
The cores can be chosen with order $O(4^c)$. Thus the criterion of odd
cyclomatic number for equality at all sufficiently large lengths remains
exact with maximum degree three. For even $c$, equality holds whenever
the attachment is sufficiently long and the resulting graph has odd order.
\end{theorem}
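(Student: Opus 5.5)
The plan is to build an explicit family of zero-slope cacti of maximum degree three and evaluate their intercept with Theorem~\ref{thm:cactusconstant}. The target is $\beta_\epsilon=\theta_\epsilon$ for odd $c$ (where $\theta_\epsilon=0$), and $\beta_\epsilon=\tfrac12+\theta_\epsilon$ for even $c$, exactly as in Remark~\ref{rem:singleparity}. In the even case, $\theta_\epsilon=(\rho(\epsilon+n-1)-\rho(\epsilon+m-1))/2$ with $m-n=c-1$ odd, so it equals $\pm\tfrac12$ according to the parity of $\epsilon+n$. Hence $\tfrac12+\theta_\epsilon=(1+(-1)^{n+t})/2$, which is the stated form. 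Theorem~\ref{thm:general} then gives the formula for every $t\geq m$. So the whole task reduces to producing, for each $c$, a core with zero slope, degree at most three, root degree two, and $\beta$ equal to $0$ or $\tfrac12+\theta$.

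\emph{Construction.} Take a complete binary tree of depth about $\log_2 c$, rooted at $r$, so $r$ has two successors. Hang squares from it by bridges, each square entered at one vertex, so that cycle entries have degree three. By Proposition~\ref{prop:cactus}, even cycles whose antipodes have degree two contribute $-2$ each, and one special square whose antipode carries a pendant tail contributes $-1$. Hence $\alpha=2c-1-2(c-1)-1=0$. The branching internal tree vertices give many successor pairs. Each pair contributes the smaller of two branch masses, and these are subtree edge counts, so $\Pi$ grows like $\sum$ (depth $\times$ subtree size) in a balanced tree. The competing size term $2(c-1)n$ is of order $c\cdot n$. Balancing the two forces long connecting paths or heavy subtrees, and spacing the squares geometrically so the masses dominate gives order $O(4^c)$. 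A few adjustable parameters remain free: the length $\ell$ of the tail at the special antipode, the number of leaves hung at degree-two path vertices (each such vertex becomes a branching point with a pair minimum of $1$), and the bridge lengths. These are used to tune the intercept exactly.

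\emph{Computation.} With all cycles squares entered by a bridge, every $R_C,R'_C$ vanishes and $\kappa_C=0$ except possibly on $C^*$. Then $D(C)$ is simply the bridge depth of the square's entry, and $\mu^*=\ell$ while $E^*$ is small. I would write $\beta_\epsilon-\theta_\epsilon$ as an explicit function of the parameters. It should be affine in $\ell$: the tail raises $n$ and $m$, but it also raises the two masses at whichever tree vertices lie above $C^*$, and it enters $\mu^*$. I would choose the root-to-$C^*$ route so that the coefficient of $\ell$ is $\pm1$, as happened in Lemma~\ref{lem:bouquet}, where the factor $\ell-a+\dots$ appeared. Then $\ell$ can be solved for to reach the required value, $0$ or $\tfrac12$ before adding $\theta$. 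The half-integer offsets from $(3c^2-8c+3)/2$ and $\theta_\epsilon$ must be tracked per parity of $c$, mirroring Remark~\ref{rem:singleparity} as the base case $c=2$ and the $c=1$ case from Proposition~\ref{prop:leafpath} with $a=0$, $\ell=1$.

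\textbf{Main obstacle.} The hardest step is making the solved value of $\ell$ a nonnegative integer while keeping every degree at most three. The bulk of $\Pi$ must come from binary splits rather than root leaves, because the bouquet's use of many root leaves is forbidden here. I would first try an inductive gadget: pass from $c$ to $c+2$ by grafting two squares under a new binary split. Then I would verify that the intercept changes by a quantity that can be cancelled by adjusting a local leaf count and the tail. This keeps the bookkeeping local and yields the $O(4^c)$ growth, since each step roughly quadruples the required masses.
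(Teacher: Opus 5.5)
Your reduction to the target $\beta_\epsilon-\theta_\epsilon\in\{0,\tfrac12\}$, the slope count from Proposition~\ref{prop:cactus}, and the base cases $c=1,2$ are fine. There is a genuine gap, however: the construction you sketch cannot reach that target, and what is missing is the mechanism that produces the order $4^c$.

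In Theorem~\ref{thm:cactusconstant}, add one vertex in a tree part of your layout (a tail vertex, a leaf, or a subdivision of a bridge or path). This changes $\beta_\epsilon-\theta_\epsilon$ by $2c-2k-2\Delta\Pi$. Here $k$ is the number of cycles beneath the new vertex, the $+1$ from $\mu^*$ or $E^*$ is already included, and $\Delta\Pi$ is at most the number of two-successor vertices above it. Paying for the size term $2(c-1)n$ therefore needs vertices to lie, on average, on the strictly lighter side of about $c-1$ binary splits. For a complete binary tree that means branching depth about $2c$, not $\log_2c$.

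With depth $\log_2c$ the untuned core has $n=\Theta(c)$, but $\Pi$ and $\sum_CD(C)$ are only $O(c\log c)$, so its intercept is positive of order $c^2$. Moreover, with $C^*$ hung below the tree, $E^*$ is not small: it counts every edge of $H$ outside $C^*$ and its tail. Because $\Delta\Pi=O(\log c)$ there, your knobs add roughly $2c$ per vertex. The exception is a vertex lying above essentially all cycles on a strictly lighter side, but that side must then be outweighed by a cycle-free side paying about $2c$ per vertex. So tails, leaves and longer paths cannot bring the intercept down to $0$ or $\tfrac12$.

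The paper instead puts the root on $C^*$ and hangs a complete binary tree of depth $2c$ from $v^*$, with the $c-1$ bare squares at its first leaves. Then $E^*=1$, both root successor masses are $m-2$, and the tree's pair minima, about $d_T2^{d_T}$, overtake the size term. This gives the negative base value $b_c=-2\cdot4^c+\frac{7c^2-40c+29}{2}-8\Gamma$.

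The tuning step also fails as stated. At maximum degree three with root degree two, the entry of $C^*$ has $s_0^{C^*}=0$. So lengthening the tail at $v^*$ by one vertex changes $\beta_\epsilon-\theta_\epsilon$ by $(2c-1)+1-2(1+j)=2(c-1-j)$, where $j$ counts the splits above $C^*$ at which its branch is strictly lighter. This is always even, so a coefficient $\pm1$ is impossible. Since $j$ changes as the tail grows, the dependence is also only piecewise affine. In Lemma~\ref{lem:bouquet} the coefficient of $\ell$ is $a+4c-4$, produced by the coupling $\kappa_{C^*}=(a+2c-2)\ell$ at a root carrying leaves, which is exactly what degree three rules out.

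Your other knobs also move the intercept by even amounts. You would therefore have to show that the untuned intercept already has the correct residue, and the proposal does not address this. The proposed induction $c\to c+2$ is not local either, since it raises the size term by $4n$ over the whole existing core.

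The paper closes these gaps in two steps. First it checks $2b_c\equiv3c^2+1\equiv q\pmod4$, with $q=0$ for odd $c$ and $q=1$ for even $c$. Then it extends two sibling leaves by paths of lengths $x\ge y$ with $X=x+y\ge4(c-1)$, which adds $2(c-1)X-2y-8(c-1)$. Taking $y$ as a residue modulo $c-1$ and then choosing $X$ solves $(c-1)X-y=A$ exactly.
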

With Theorem~\ref{thm:oddequality} and Corollary~\ref{cor:parity},
this completes Theorem~\ref{main:equality}.

\begin{figure}[!ht]
\centering
\includegraphics[width=\linewidth]{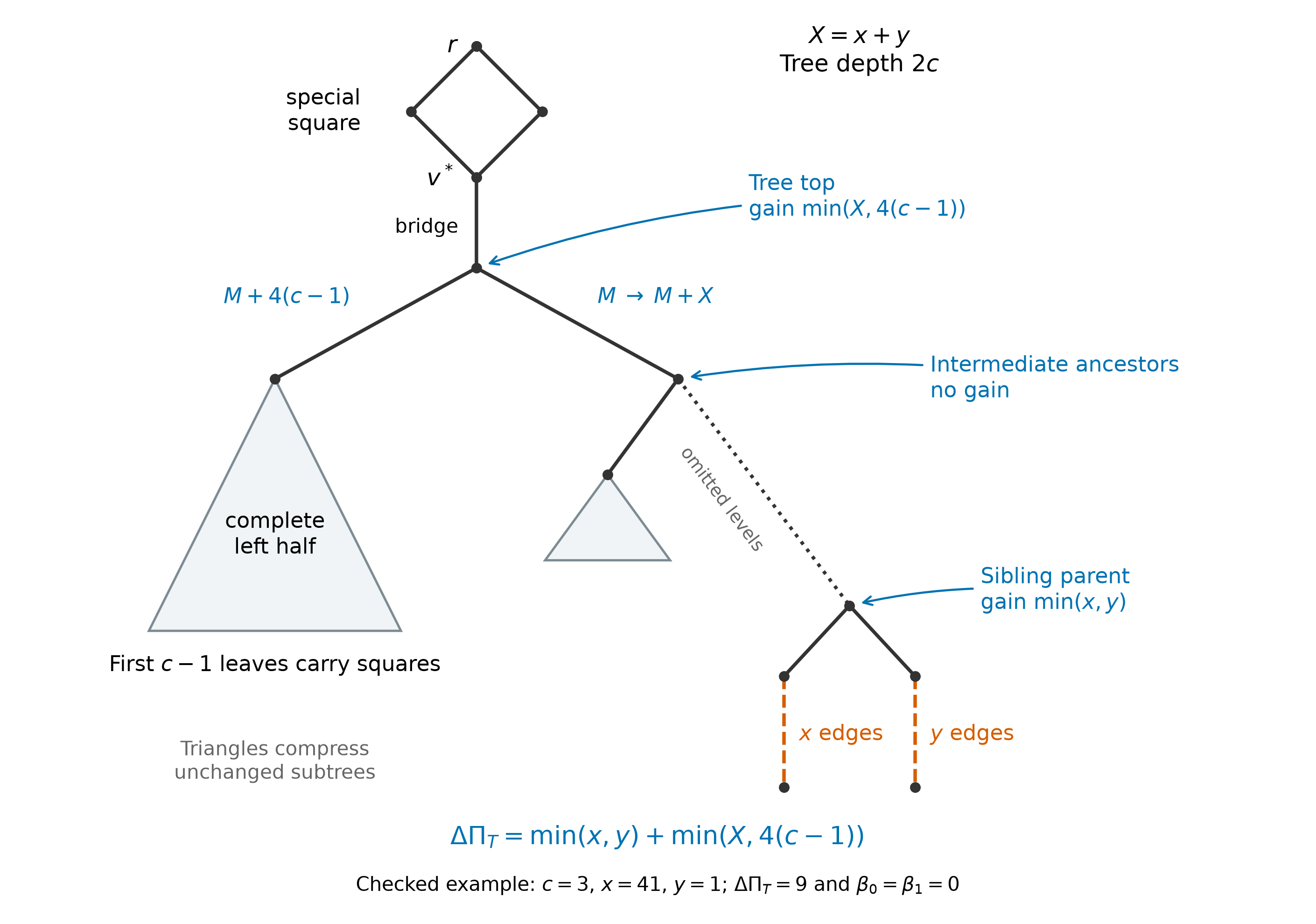}
\caption{The construction of Theorem~\ref{thm:uniformequality} for
$c\geq2$, with $v^*$ the antipode of $r$ on the root square,
$X=x+y$, and $M=2^{2c}-1$ the initial right-branch mass at
the tree top. Triangles compress unchanged subtrees and the dotted
segment suppresses intermediate levels; orange paths are not to scale.
Blue labels give the local changes of the pair-minimum sum, whose total
is $\Delta\Pi_T$.}
\label{fig:uniformequality}
\end{figure}

\paragraph{Idea of the proof.}
Hang a complete binary tree of depth $2c$ from the antipode of a square
at the root, and put $c-1$ further squares on its leftmost leaves.
Theorem~\ref{thm:cactusconstant} gives this core a negative parity-independent
intercept $\beta_\epsilon-\theta_\epsilon$ of
order $-2\cdot4^c$. Extending the two rightmost sibling leaves by paths
of lengths $x$ and $y$ raises this quantity by $2(c-1)(x+y)$ and lowers
it by twice the increase $\Delta\Pi_T=\min(x,y)+\min(x+y,4(c-1))$ of the
pair-minimum sum. Only the siblings' parent and the tree top contribute
to this increase; the intermediate ancestors have zero gain
(Figure~\ref{fig:uniformequality}). Once the second minimum is saturated,
one length fixes a residue and the other the size, so the target is met
exactly. Appendix~\ref{app:degreethree} gives the details.

\section{Concluding remarks}\label{sec:conclusion}
An odd cycle in the core is enough to reverse the Mostar line-graph
inequality after a sufficiently long path is attached at any vertex
(Theorem~\ref{main:reversal}). Triangle chains realize this at every
fixed positive cyclomatic number with maximum degree three, settling
Alex--Indulal Problem 3.3.

Equality at every sufficiently large attachment length is possible
exactly at odd cyclomatic numbers (Theorem~\ref{main:equality}). At even
cyclomatic numbers the parity term of Corollary~\ref{cor:parity}
permits equality on only one parity class. Maximum degree three
suffices in both cases.

The comparison law (Theorem~\ref{main:law}) determines an infinite
attachment family through one finite core calculation. It also enables
prescribed differences: decorations of any fixed even cycle realize
every integer eventual value (Proposition~\ref{prop:leafpath}). These
results concern fixed cores carrying growing paths. Within such
families they answer Problems 3.1 and 3.2 in part; classification of
all equality graphs remains open.

Two natural questions arise.

\begin{question}\label{q:beyondcacti}
Lemma~\ref{lem:massconstant} reduces every zero-slope intercept to
through-masses, with no cactus hypothesis. Which branch and overlap
patterns make it vanish beyond cacti?
\end{question}

\begin{question}\label{q:polynomial}
Can the $O(4^c)$ order in Theorem~\ref{thm:uniformequality} be replaced
by a polynomial bound?
\end{question}

The first asks for a structural description of equality, the second for
more efficient realizations.

\appendix
\section{The explicit threshold for triangle chains}\label{app:threshold}
\begin{proposition}\label{prop:threshold}
Let $(K,r)$ be the chain of $c\geq1$ triangles in
Theorem~\ref{thm:existence}. For every integer $t\geq4c-1$,
\[
 \Mo(L(K_t))-\Mo(K_t)
 =(2c-1)t+\frac{17c^2-28c+9}{2}+\theta_{t\bmod2}>0.
\]
Here $\theta_\epsilon$ is the parity term defined in
Section~\ref{subsec:throughmass}.
\end{proposition}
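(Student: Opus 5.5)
Since $m=4c-1$, Theorem~\ref{thm:general} applies for every $t\geq 4c-1$, and Corollary~\ref{cor:rooted} already gives $\alpha=2c-1$. It remains to compute $\beta_\epsilon$ from \eqref{eq:beta}. Here $n=3c$ and $m=4c-1$, so the parity part is exactly $\theta_\epsilon$. The constant part is
\[
\frac{1+(4c-1)^2-(3c-1)^2}{2}=\frac{7c^2-2c+1}{2}.
\]
The work therefore reduces to evaluating $S'$, $B$ and $B'$ for the triangle chain rooted at $u_1$ and its line core $(J,p)$, where $J=L(K+u_1z)$.

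\emph{Step 1: $S'$ and $B$.} By \eqref{eq:sJ} with all $a_v=1$ we get $S'=2+\sum_{v\ne r}(\deg v-1)=2+2m-2-(n-1)=5c-1$. For $B$, list the levels: $u_1$ at $0$; $v_1,w_1$ at $1$; and so on along the chain. The only horizontal edges are $v_iw_i$. Horizontal edges contribute $|b|$ and nonhorizontal edges contribute $\sigma b$, oriented away from the root. For the bridge $v_iu_{i+1}$, the imbalance is $(3i)-(3(c-i))$ in the signed direction, giving a contribution $3(c-i)-3i$. For an edge $u_iv_i$, $u_i$ keeps the earlier part plus itself, with $w_i$ tied, against $v_i$ with the later triangles; similarly for $u_iw_i$. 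For $v_iw_i$, $v_i$ gets everything beyond, so $|b|=3(c-i)$. Summing these linear terms in $i$ gives $B$ as a quadratic in $c$.

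\emph{Step 2: $B'$.} This is the main obstacle. $J$ has vertex $p$ adjacent to $u_1v_1,u_1w_1$. Each triangle $i$ becomes a triangle of edges in $J$ plus the bridge-vertices, so $J$ is again a chain of triangles and pendant-like structures. Concretely, $J$ contains triangles $\{u_iv_i,u_iw_i,v_iw_i\}$, the triangles $\{u_iv_i,v_iw_i,v_iu_{i+1}\}$, and the analogous ones at $u_{i+1}$. I would instead use Lemma~\ref{lem:throughmass}: for each edge $ef$ of $J$ sharing vertex $v$ of $H$, $b_{ef}=M_v(e)-M_v(f)$, with masses computed in $H=K+u_1z$. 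Masses in a triangle chain are easy, being edge counts of the branches beyond. I would classify the edges of $J$ by the shared vertex: at $u_i$ (the pairs among $u_iv_i$, $u_iw_i$ and the incoming bridge or $p$), and at $v_i$ (the pairs among $u_iv_i$, $v_iw_i$, $v_iu_{i+1}$); $w_i$ has only two incident edges. Levels in $J$ follow from \eqref{eq:distance}, which decides which edges of $J$ are horizontal (absolute value) and which are signed. Each pair gives a linear expression in $i$ and $c$, and summing over $i$ gives $B'$.

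\emph{Step 3.} Substitute into $\beta_\epsilon=\frac{7c^2-2c+1}{2}+B'-B-S'+\theta_\epsilon$ and check that the result is $\frac{17c^2-28c+9}{2}+\theta_\epsilon$. As sanity checks, $c=1$ must give $t-1$, matching \cite{AlexIndulal}, and $c=2$ must give $\beta_0=11$ and $\beta_1=10$, matching Figure~\ref{fig:reversal}. For positivity, note that $|\theta_\epsilon|\leq1/2$ and $(2c-1)t\geq(2c-1)(4c-1)$. Then $17c^2-28c+9+2(2c-1)(4c-1)-1>0$ for all $c\geq1$; the quadratic $17c^2-28c+8$ alone is negative at $c=1$, but the linear term compensates.
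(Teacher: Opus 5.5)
Your strategy is the paper's: slope from Corollary~\ref{cor:rooted}, $S'=5c-1$ from \eqref{eq:sJ}, $B$ by direct vertex counting in $K$, $B'$ through Lemma~\ref{lem:throughmass} with levels from \eqref{eq:distance}, then substitution into \eqref{eq:beta}. Your constant $(7c^2-2c+1)/2$, your $S'$, and your positivity argument are correct; the positivity argument is uniform in $c$ via $t\geq4c-1$, whereas the paper splits off $c=1$. The gap is that you stop where the content of the proposition lies. You never obtain $B=3c(c-1)$ or $B'=8c^2-11c+3$, and Step~3 only announces that the substitution should be checked. The stated constant is $(7c^2-2c+1)/2+B'-B-S'$, so everything hinges on $B'-B-S'=5c^2-13c+4$. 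The checks at $c=1,2$ cannot replace this: they rest on outside computations, and two values do not determine a quadratic in $c$.

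\textbf{Fixing $B$.} First correct a sign. Orienting away from the root, the bridge $v_iu_{i+1}$ contributes $n_{v_i}-n_{u_{i+1}}=6i-3c$, not $3(c-i)-3i$. This is harmless only because $\sum_{i=1}^{c-1}(6i-3c)=0$. Together with $6i-3c-3$, $3i-3$ and $3(c-i)$ on the triangle edges, it gives $B=\sum_{i=1}^{c}(6i-6)=3c(c-1)$.

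\textbf{Computing $B'$.} Write $A_i=u_iv_i$, $U_i=u_iw_i$, $Z_i=v_iw_i$, $Y_i=v_iu_{i+1}$ and $Y_0=p$. Let $\phi_i=4i-3$ count the edges of $H$ on the root side of triangle $i$, including $Y_{i-1}$, and let $\psi_i=4(c-i)$ count those beyond it, including $Y_i$.

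Be careful with your remark that masses are ``edge counts of the branches beyond''. Inside a triangle the opposite edge is reached through both neighbours, so $M_{u_i}(v_i)=\psi_i+2$ and $M_{u_i}(w_i)=2$. Likewise $M_{v_i}(u_i)=M_{w_i}(u_i)=\phi_i+2$, $M_{v_i}(w_i)=2$, $M_{w_i}(v_i)=\psi_i+2$, $M_{v_i}(u_{i+1})=\psi_i$ and $M_{u_i}(v_{i-1})=\phi_i$.

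The pairs $A_iU_i$, $A_iZ_i$, $U_iZ_i$, $Y_{i-1}A_i$, $Y_{i-1}U_i$, $A_iY_i$, $Z_iY_i$ then contribute $\psi_i$, $\phi_i$, $\phi_i-\psi_i$, $\phi_i-\psi_i-2$, $\phi_i-2$, $\phi_i-\psi_i+2$, $\psi_i-2$. The first and last pairs are horizontal, and the last uses $\psi_i\geq4$ to drop the absolute value. The last two pairs are absent at $i=c$, where $v_c$ has degree two. Hence $B'=\sum_{i<c}(5\phi_i-\psi_i-4)+4\phi_c-4=8c^2-11c+3$, and your Step~3 then goes through.
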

\begin{proof}
Keep the notation of the proof of Theorem~\ref{thm:existence}, so
$n=3c$ and $m=4c-1$. Counting nonhorizontal edges gives $S=3c-1$;
\eqref{eq:sJ} gives $S'=5c-1$. The outward imbalances on
$u_iv_i,u_iw_i,v_iu_{i+1}$ are $6i-3c-3,3i-3,6i-3c$,
respectively; the horizontal edge $v_iw_i$ contributes $3(c-i)$.
Summing gives $B=3c(c-1)$.

For the line core put $A_i=u_iv_i$, $U_i=u_iw_i$, $Z_i=v_iw_i$,
$Y_i=v_iu_{i+1}$ and $Y_0=p$. The vertices to the left of triangle
$i$, including $Y_{i-1}$, form a group of size $\phi_i=4i-3$;
those to its right, including $Y_i$, form a group of size $\psi_i=4(c-i)$.
Each group attaches through its named vertex, giving
\[
\begin{array}{c|rrrrrrr}
\text{edge}&A_iU_i&A_iZ_i&U_iZ_i&Y_{i-1}A_i&Y_{i-1}U_i&A_iY_i&Z_iY_i\\
\hline
B'\text{ term}&\psi_i&\phi_i&\phi_i-\psi_i&\phi_i-\psi_i-2&\phi_i-2&\phi_i-\psi_i+2&\psi_i-2
\end{array}
\]
By Lemma~\ref{lem:throughmass}, each signed difference for $vu,vw$ is
$M_v(u)-M_v(w)$, as in $A_iU_i:(\psi_i+2)-2=\psi_i$.
The first and last columns are horizontal and use absolute differences;
all others are oriented away from $p$. The last two occur only for
$i<c$, when $\psi_i\geq4$. These edges partition $E(J)$, so
$B'=\sum_{i<c}(5\phi_i-\psi_i-4)+4\phi_c-4=8c^2-11c+3$.
Substituting $S,S',B,B'$ into~\eqref{eq:beta}, Theorem~\ref{thm:general}
gives the stated formula for $t\geq m=4c-1$. It is positive: for $c=1$
it equals $t-1$, and for $c\geq2$ the constant is at least
$(17c^2-28c+8)/2>0$.
\end{proof}

\section{Proof of the cactus intercept formula}\label{app:cactusproof}
Evaluate all masses in $H=K+rz$ and all vertex sums over $V(K)$.
Let $\mathcal E^-(v)$ and $\mathcal E^+(v)$ be the predecessor and successor
edges at $v$, taking $\mathcal E^-(r)=\{p\}$; successor edges are edges of $K$.
For $e=vu$ abbreviate $M_v(e)=M_v(u)$.
If $K$ is bipartite, the line edges from $\mathcal E^-(v)\times \mathcal E^+(v)$
are oriented from predecessor to successor, and pairs within either
set are horizontal. Lemma~\ref{lem:throughmass} gives
\begin{equation}\label{eq:massB}
\begin{split}
 B'=\sum_v\biggl(&\sum_{e\in \mathcal E^-(v),\ f\in \mathcal E^+(v)}(M_v(e)-M_v(f))\\
 &+\sum_{\{e,e'\}\subset \mathcal E^-(v)}|M_v(e)-M_v(e')|
  +\sum_{\{f,f'\}\subset \mathcal E^+(v)}|M_v(f)-M_v(f')|\biggr).
\end{split}
\end{equation}
The identity $|x-y|=x+y-2\min\{x,y\}$ converts the last sum to
branch masses and the pair minima in~\eqref{eq:pairmin}.

Suppose now that the slope is zero. Theorem~\ref{thm:general} then gives
exactly one vertex $v^*$ with two predecessors and one successor $f^*$;
every other vertex with multiple predecessors is a sink. For a successor
edge $f=vu$, put $N_v(f)=n_u(f;K)$ and $\delta(f)=M_v(f)-N_v(f)$.
Define the overlap
$\Omega_v=\sum_{u\sim_H v}M_v(u)-(m+1)\geq0$; every edge of $H$ is
reached through at least one neighbor of $v$.

\begin{lemma}[Mass reduction at zero slope]\label{lem:massconstant}
For every rooted core with zero slope and every
$\epsilon\in\{0,1\}$,
\begin{equation}\label{eq:massconstant}
\begin{split}
 \beta_\epsilon={}&2(c-1)n+m+\frac{3c^2-8c+3}{2}+\theta_\epsilon-2\Pi\\
 &+\sum_{a_v\leq1}s_v\Omega_v+\Omega_{v^*}-M_{v^*}(f^*)
       -2\sum_v\sum_{f\in \mathcal E^+(v)}\delta(f)\\
 &+\sum_{a_v\geq2}\ \sum_{\{e,e'\}\subset \mathcal E^-(v)}|M_v(e)-M_v(e')|.
\end{split}
\end{equation}
\end{lemma}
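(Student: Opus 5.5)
We start from \eqref{eq:beta}, which at zero slope reads $\beta_\epsilon=\tfrac{1+m^2-(n-1)^2}{2}+\theta_\epsilon+B'-B-S'$. Since the slope is zero, $S'=S+1$. The plan is to express $B$, $B'$ and $S$ through masses in $H$ and then collect terms. For $B'$ we use \eqref{eq:massB}, which holds because a zero-slope core is bipartite (Theorem~\ref{thm:general}). In its successor-pair sum we rewrite $|M_v(f)-M_v(f')|=M_v(f)+M_v(f')-2\min\{M_v(f),M_v(f')\}$. This produces $-2\Pi$ together with $(s_v-1)\sum_{f\in\mathcal E^+(v)}M_v(f)$ at each $v$. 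The predecessor-pair sum is kept as it stands; it is nonzero only when $a_v\ge2$, and it appears verbatim in \eqref{eq:massconstant}.

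The cross sum over $\mathcal E^-(v)\times\mathcal E^+(v)$ equals $s_v\sum_{e\in\mathcal E^-(v)}M_v(e)-a_v\sum_{f\in\mathcal E^+(v)}M_v(f)$. There are no horizontal edges, so at a vertex with $a_v\le1$ we have $\sum_{e}M_v(e)=m+1+\Omega_v-\sum_f M_v(f)$. Combined with the successor-pair term, the coefficient of $\sum_f M_v(f)$ at such a vertex is $(s_v-1)-s_v-a_v=-1-a_v$, and the vertex contributes $s_v(m+1)+s_v\Omega_v$ plus this. At the unique vertex $v^*$ with $a_{v^*}=2$ and $s_{v^*}=1$, the overlap identity gives $\sum_e M_{v^*}(e)=m+1+\Omega_{v^*}-M_{v^*}(f^*)$. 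The cross sum there is then $m+1+\Omega_{v^*}-M_{v^*}(f^*)-2M_{v^*}(f^*)$, which isolates the explicit term $\Omega_{v^*}-M_{v^*}(f^*)$ of the lemma. The remaining multi-predecessor vertices are sinks and contribute only predecessor-pair terms.

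Next we treat $B$. A zero-slope core has no horizontal edges, so $B=\sum_f b_f$ over edges oriented away from the root, and for a successor edge $f=vu$ we have $b_f=n_v(f;K)-N_v(f)=n-2N_v(f)$. This holds because $K$ is bipartite, so no vertex is equidistant from the endpoints of an edge. Hence $B=nS-2\sum_f N_v(f)$.

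The successor masses then appear with total coefficient $-(1+a_v)$ in $B'$, and every successor mass $M_v(f)$ must be paired with $N_v(f)$. Writing $M_v(f)=N_v(f)+\delta(f)$ yields $-2\sum\delta(f)$, and the remaining coefficient must be absorbed by counting arguments. Here $\sum_f$ over all successor edges counts edges, with $\sum_v s_v=S$. Sums $\sum_v a_v\sum_f N_v(f)$ should reduce by double counting, since each vertex $w$ lies beyond a given successor edge. We also use $S=m$ (no horizontal edges, bipartite), $n=m-c+1$, and $\sum_v s_v(m+1)=S(m+1)$. Collecting the polynomial pieces in $n,m,c$ should give $2(c-1)n+m+\tfrac{3c^2-8c+3}{2}$, after subtracting $S'=S+1$ and the $v^*$ adjustments.

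The main obstacle is this bookkeeping. We must check that the leftover coefficients $a_v\sum_f N_v(f)$ collapse via $\sum_v s_v(a_v-1)=1$ (the zero-slope condition from \eqref{eq:nonnegative}) into closed forms in $n$ and $m$. We must also keep the special treatments of the root (with $\mathcal E^-(r)=\{p\}$ and $M_r(p)=1$) and of $v^*$ consistent. I would verify the final constant first on the square with a far leaf, where $\beta=0$, and on Proposition~\ref{prop:leafpath}, before trusting the general collection.
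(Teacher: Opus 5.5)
Your setup is the paper's: you start from \eqref{eq:beta}, expand $B'$ by \eqref{eq:massB}, rewrite successor pairs by $|x-y|=x+y-2\min\{x,y\}$, use the overlap identities, and write $B=\sum_v\sum_{f\in\mathcal E^+(v)}(n-2N_v(f))$. But the proposal stops where the lemma's content lies, and the step you flag as the main obstacle is misdiagnosed.

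The cross sum at $v$ is $s_v\sum_{e\in\mathcal E^-(v)}M_v(e)-|\mathcal E^-(v)|\,T_v$, with $T_v=\sum_{f\in\mathcal E^+(v)}M_v(f)$. Here $|\mathcal E^-(v)|=1$ at \emph{every} vertex with $a_v\le1$, including $r$: its predecessor edge is $p$, although $a_r=0$. So the coefficient of $T_v$ is $-1-s_v+(s_v-1)=-2$ uniformly. Your $-1-a_v$ agrees with this at nonroot vertices but gives $-1$ at $r$. That would leave a spurious $+T_r=m+\Omega_r$ in $\beta_\epsilon$.

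With the correct count, $-2T_v$ cancels exactly against the $+2\sum_fN_v(f)$ coming from $-B_v$. Since $s_v(m+1)-s_vn=s_vc$, this gives
\[
 B'_v-B_v=s_v(c+\Omega_v)-2\Pi_v-2\sum_{f\in\mathcal E^+(v)}\delta(f)\qquad(a_v\le1),
\]
where $\Pi_v$ is the pair-minimum sum at $v$. At $v^*$, with predecessor edges $e_1,e_2$, the coefficient of $M_{v^*}(f^*)$ is $-3$. One copy is the explicit $-M_{v^*}(f^*)$, and the other two pair with $2N_{v^*}(f^*)$. This gives $B'_{v^*}-B_{v^*}=c+\Omega_{v^*}-M_{v^*}(f^*)-2\delta(f^*)+|M_{v^*}(e_1)-M_{v^*}(e_2)|$. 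Sinks contribute only predecessor-pair differences.

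Hence no residual sums such as $\sum_va_v\sum_fN_v(f)$ ever arise. The double-counting collapse you propose could not work anyway. The sum $\sum_v\sum_fN_v(f)$ is a Szeged-type quantity that is not determined by $n,m,c$; it changes, for example, when a pendant tree is reshaped. The zero-slope identity $\sum_v(a_v-1)s_v=1$ serves only to single out $v^*$ and the sinks.

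The bookkeeping that actually remains is short:
\begin{enumerate}
\item Since $S=m$ and $s_{v^*}=1$, we have $\sum_{a_v\le1}s_v=m-1$. So the $c$-terms total $cm$, and $B'-B=cm-2\Pi+\mathcal R$, where $\mathcal R$ denotes the last two lines of \eqref{eq:massconstant}.
\item Using $S'=m+1$ and $m^2-(n-1)^2=c(m+n-1)$, \eqref{eq:beta} becomes $\beta_\epsilon=\tfrac{1+c(m+n-1)}{2}-(m+1)+cm+\theta_\epsilon-2\Pi+\mathcal R$.
\item Substituting $m=n+c-1$ gives the stated constant $2(c-1)n+m+\tfrac{3c^2-8c+3}{2}$.
\end{enumerate}
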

\begin{proof}
The core is bipartite by Theorem~\ref{thm:general}, so~\eqref{eq:massB}
holds. Write $B'_v$ for the summand at $v$ in~\eqref{eq:massB} and
$B_v=\sum_{f\in \mathcal E^+(v)}(n-2N_v(f))$, so $B=\sum_vB_v$.
For each vertex $v$, put
$T_v=\sum_{f\in \mathcal E^+(v)}M_v(f)$ and
$\Pi_v=\sum_{\{f,f'\}\subset \mathcal E^+(v)}\min\{M_v(f),M_v(f')\}$.
If $a_v\leq1$, there is one predecessor edge $e$ in $H$, including at $r$.
Since $M_v(e)=m+1+\Omega_v-T_v$, the pair-sum identity gives
\begin{align*}
 B'_v&=s_vM_v(e)-T_v+(s_v-1)T_v-2\Pi_v
       =s_v(m+1+\Omega_v)-2T_v-2\Pi_v,\\
 B'_v-B_v&=s_v(c+\Omega_v)-2\Pi_v
                       -2\sum_{f\in \mathcal E^+(v)}\delta(f).
\end{align*}
At $v^*$, write $e_1,e_2$ for the predecessor edges. Their masses
sum to $m+1+\Omega_{v^*}-M_{v^*}(f^*)$.
Thus, writing $\Delta_{v^*}=|M_{v^*}(e_1)-M_{v^*}(e_2)|$,
\begin{align*}
 B'_{v^*}-B_{v^*}
 &=m+1+\Omega_{v^*}-3M_{v^*}(f^*)+\Delta_{v^*}-n+2N_{v^*}(f^*)\\
 &=c+\Omega_{v^*}-M_{v^*}(f^*)
                         -2\delta(f^*)+\Delta_{v^*}.
\end{align*}
Vertices with no successor contribute only predecessor-pair differences.
The successor counts for $a_v\leq1$ sum to $m-1$, and the special
vertex supplies the remaining one. Also $\sum_v\Pi_v=\Pi$, with
$\Pi_v=0$ at the special vertex and at sinks. Let $\mathcal R$ denote
the last two lines of~\eqref{eq:massconstant}. Since $S=m$, $S'=m+1$
and $m=n+c-1$, summing and applying~\eqref{eq:beta} gives
\begin{align*}
 B'-B&=cm-2\Pi+\mathcal R,\\
 \beta_\epsilon&=\frac{1+c(m+n-1)}2-(m+1)+cm
                   +\theta_\epsilon-2\Pi+\mathcal R,\\
 \frac{1+c(m+n-1)}2-(m+1)+cm
   &=(2c-1)n+\frac{3c^2-6c+1}{2}\\
   &=2(c-1)n+m+\frac{3c^2-8c+3}{2}.
\end{align*}
This gives~\eqref{eq:massconstant}.
\end{proof}

\begin{proof}[Proof of Theorem~\ref{thm:cactusconstant}]
In Lemma~\ref{lem:massconstant}, overlaps and predecessor-mass
differences give the arrangement terms; mass defects give the depths.

At a bridge, the reached edges are
exactly its branch edges. For a cycle edge $x_ix_{i\pm1}$ viewed from
$x_i$, on a cycle of length $2k$, the \emph{far half} consists of the
$k$ vertices $x_{i\pm1},\ldots,x_{i\pm k}$, those nearer to
$x_{i\pm1}$ than to $x_i$, and the $k$ edges of the cycle path
$x_ix_{i\pm1}\cdots x_{i\pm k}$. The reached edges are these $k$
edges and all edges attached to those $k$ vertices. This proves
the stated masses. Two reached sets overlap only for neighbors on the
same cycle, in the edges attached at the antipode. Therefore $\Omega_{x_i}$
is the sum of these antipodal edge counts over cycles through $x_i$.
Since $\deg_K(v^*)=3$ and two of its edges lie on $C^*$, the third is a
bridge, so $C^*$ is the only cycle through $v^*$; its antipode there is
the entry $x_0$. Hence $\Omega_{v^*}=E^*$. A nonantipodal cycle vertex has
$s_i^C+1$ successors, or $s_0^C+2$ at the entry, whence
\[
 \sum_{a_v\leq1}s_v\Omega_v
  =\sum_C(\kappa_C+R_C+R'_C+2E_k^C).
\]
Here $E_k^C=0$ except on $C^*$: by the slope classification every
other entry antipode has degree two, so nothing is attached there.
On $C^*$ it is $\mu^*$, which is also $M_{v^*}(f^*)$, the mass of the
single successor bridge at $v^*$. The two predecessor masses at a cycle's antipode
differ by $R_C-R'_C$.

To evaluate the defects, first let $f=vu$ be a successor bridge and
$T$ the component of $K-f$ containing $u$. Then
$M_v(f)=1+|E(T)|$ and $N_v(f)=|V(T)|$, so $\delta(f)$ is the number
of cycles in $T$. For a successor edge on an even cycle $C'$ of length
$2k$, let $T_1,\ldots,T_k$ be the components of $K-E(C')$ at the
$k$ vertices of its far half. This half excludes the entry of $C'$,
so these components contain neither $r$ nor the added leaf $z$. The edge mass is
$k+\sum_i|E(T_i)|$ and the nearer-vertex count is $\sum_i|V(T_i)|$.
Thus $\delta(f)=\sum_i(|E(T_i)|-|V(T_i)|+1)$ counts their cycles;
the cycle $C'$ itself is not counted.

Now fix a cycle $C$ and count the defects in which it occurs.
Only blocks on the route from $r$ to its entry contribute, with one
contribution from each bridge. Suppose an earlier cycle
$C'=x_0\cdots x_{2k-1}x_0$ is entered at $x_0$ and exited at $x_j$.
Reverse its orientation if needed so that $0\leq j\leq k$.
On the first arc, the far half of $x_ix_{i+1}$ contains $x_j$
exactly for $0\leq i<j$. On the other arc, the far half of
$x_{2k-i}x_{2k-i-1}$ contains $x_j$ exactly for $k-j\leq i<k$,
with subscripts modulo $2k$. There are $j$ contributions on each
arc, including zero at $j=0$ and all $2k$ edges at $j=k$.
Each traversed cycle arc therefore contributes twice its length.
Interchanging the two finite counts gives
\begin{equation}\label{eq:cycledepthcount}
 \sum_v\sum_{f\in \mathcal E^+(v)}\delta(f)
 =\sum_C\bigl(N_{\rm br}(C)+2N_{\rm cyc}(C)\bigr)=\sum_CD(C).
\end{equation}
Substitute these evaluations into
\eqref{eq:massconstant} and use
$R_C+R'_C+|R_C-R'_C|=2\max\{R_C,R'_C\}$ to obtain the formula.
\end{proof}

\section{The degree-three construction}\label{app:degreethree}
Let $\mathcal F(c,d_T)$ consist of a square through the root $r$ with
antipode $v^*$, a bridge from $v^*$ to the top of a complete binary
tree of depth $d_T$, and a
bare square at each of the tree's first $c-1$ leaves, formed by
identifying that leaf with one vertex of a fresh square.
Here $d_T\geq1$ and $c-1\leq2^{d_T}$. Give the tree heap indices
$1,\ldots,2^{d_T+1}-1$, with children $2j,2j+1$ at $j$; the first
leaves are $2^{d_T},\ldots,2^{d_T}+c-2$. The root $r$ has degree two and all
degrees are at most three. The special antipode is $v^*$.
Let $\Gamma$ sum, over tree branching vertices, the smaller number
of attached squares in their two child subtrees.

\begin{proposition}[Binary branching]\label{prop:binaryfamily}
For all integers $c\geq1$ and $d_T\geq1$ with $c-1\leq2^{d_T}$, the
intercepts of $\mathcal F(c,d_T)$ are
\begin{equation}\label{eq:binaryfamily}
 \beta_\epsilon=2^{d_T+1}(2c-1-d_T)+\frac{15c^2-48c+29}{2}
                      -2(c-1)d_T-8\Gamma+\theta_\epsilon.
\end{equation}
Adding a pendant vertex at a tree vertex outside the cycles changes
$\beta_\epsilon-\theta_\epsilon$ by $2(c-1)-2\Delta\Pi_T$, where $\Delta\Pi_T$ is the resulting
change of the pair-minimum sum inside the tree. More generally, a
sequence of such additions and their inverse deletions, preserving
the cycles, their incidences and entry depths, changes it by
$2(c-1)\Delta n-2\Delta\Pi_T$.
\end{proposition}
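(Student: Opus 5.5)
The plan is to apply Theorem~\ref{thm:cactusconstant} directly to $\mathcal F(c,d_T)$. Every term of \eqref{eq:cactusconstant} is an explicit edge count in this family. Label the root square $r\,a\,v^*\,b\,r$. It contributes $4$ vertices and $4$ edges, the bridge to the tree top contributes one edge, the tree has $2^{d_T+1}-1$ vertices and $2^{d_T+1}-2$ edges, and each of the $c-1$ attached squares adds $3$ vertices and $4$ edges. This gives $n=2^{d_T+1}+3c$ and $m=2^{d_T+1}+4c-1$.

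First I check zero slope by Proposition~\ref{prop:cactus}. The special antipode $v^*$ has degree three, and the antipodes of the leaf squares have degree two. Hence $\alpha=2c-1-1-2(c-1)=0$, with $C^*$ the root square.

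Next I list the cycle terms.
\begin{itemize}
\item On $C^*$, with entry $r$ and $k^*=2$, the vertices $a,b$ carry nothing. So $R=R'=0$. Also $\kappa=0$, since $r$ has no successor off the cycle (recall $p$ is never a successor) and $a,b$ have none either. Further $D(C^*)=0$ and $\mu^*=E_2=2^{d_T+1}-1+4(c-1)$. Finally $E^*=m+1-4-\mu^*=1$, which is just the edge $p$.
\item Each leaf square is entered at its tree leaf, and its other three vertices are bare. So its $R,R',\kappa$ all vanish.
\item The shortest route to a leaf square uses two cycle edges of $C^*$, the bridge, and $d_T$ tree bridges. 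This gives $D(C)=d_T+5$ for each of the $c-1$ leaf squares.
\end{itemize}

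Then I compute $\Pi$ from the mass rule in Theorem~\ref{thm:cactusconstant}. At $r$ the two successor cycle edges each have mass $2+\mu^*$, so $r$ contributes $2+\mu^*$. At $v^*$ and at the side vertices of squares there is only one successor, and the square antipodes are sinks. Each square-carrying leaf has two successor cycle edges of mass $2$, contributing $2$ each, for $2(c-1)$ in total. An internal tree vertex whose children have height $h$ has child masses $2^{h+1}-1+4q_i$, where $q_i$ is the number of squares below child $i$. Its pair minimum is therefore $2^{h+1}-1+4\min\{q_1,q_2\}$. There are $2^{d_T-1-h}$ such parents, so the tree part sums to
\[
\sum_{h=0}^{d_T-1}2^{d_T-1-h}(2^{h+1}-1)+4\Gamma=(d_T-1)2^{d_T}+1+4\Gamma .
\]
Substituting $n$, $m$, $\Pi$, $\sum_C D(C)=(c-1)(d_T+5)$, $\mu^*$ and $E^*=1$ into \eqref{eq:cactusconstant} and collecting powers of $2$ should give \eqref{eq:binaryfamily}. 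The $-8\Gamma$ there is $-2\cdot4\Gamma$. I expect the main risk to be this bookkeeping: the $2^{d_T}$-terms from $2(c-1)n$, from $m$, from $\mu^*$ and from the tree pair minima must combine into $2^{d_T+1}(2c-1-d_T)$. I will verify the constant at $c=1$, $d_T=1$ by hand.

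For the perturbation statement I use the same formula rather than \eqref{eq:binaryfamily}. Adding a pendant vertex at a tree vertex outside the cycles increases $n$, $m$ and $\mu^*$ by one each, and leaves $c$, $E^*$, every $D(C)$, $R_C$, $R'_C$ and $\kappa_C$ unchanged. The tree lies below $v^*$ and away from all cycle vertices other than the leaf entries, whose $E_i^C$ terms are not affected. Inside $\Pi$, the root pair minimum $2+\mu^*$ rises by one, and the tree part changes by $\Delta\Pi_T$. The net change of $\beta_\epsilon-\theta_\epsilon$ is therefore
\[
2(c-1)+1+1-2(1+\Delta\Pi_T)=2(c-1)-2\Delta\Pi_T .
\]
For sequences of additions and deletions that preserve the cycles, their incidences and entry depths, the same term-by-term accounting applies at each step, since the only changed quantities are $n$, $m$, $\mu^*$ and $\Pi$. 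Telescoping over the steps gives $2(c-1)\Delta n-2\Delta\Pi_T$.
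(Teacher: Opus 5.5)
Your proposal is correct and follows the paper's proof essentially step for step. You use the same values $n=2^{d_T+1}+3c$, $\mu^*=m-4$, $E^*=1$, $D(C)=d_T+5$ and vanishing $\kappa_C,R_C,R'_C$, substitute the same $\Pi=(m-2)+2(c-1)+(d_T-1)2^{d_T}+1+4\Gamma$ into Theorem~\ref{thm:cactusconstant}, and run the same edit-rule accounting via the unit increments of $n$, $m$, $\mu^*$ and the root pair minimum. The bookkeeping you deferred does close: the $2^{d_T+1}$-coefficient is $2(c-1)+1-(d_T+1)+1=2c-1-d_T$, and the remaining terms sum to $(15c^2-48c+29)/2-2(c-1)d_T-8\Gamma$.
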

\begin{proof}
Here $n=2^{d_T+1}+3c$. Every cycle has $\kappa_C=R_C=R'_C=0$;
$D(C^*)=0$, and $D(C)=5+d_T$ for each pendant square. Also
$\mu^*=m-4$ and $E^*=1$. The two root successor edges both have mass
$m-2$, and each pendant square contributes pair minimum $2$ at its
entry. If a tree vertex's child subtrees have height $g$, its pair
minimum is $2^{g+1}-1$ plus four times the smaller number of squares
below them. Summing gives
\[
 \Pi=(m-2)+2(c-1)+(d_T-1)2^{d_T}+1+4\Gamma.
\]
Theorem~\ref{thm:cactusconstant} now gives~\eqref{eq:binaryfamily}.
An added tree vertex raises $n,m,\mu^*$ and the root pair minimum
by one; the arrangement and cycle depths stay fixed. The net change
outside the tree's pair minima is $2(c-1)$, proving the edit rule.
The sequence rule follows by summing these changes.
\end{proof}

\begin{proof}[Proof of Theorem~\ref{thm:uniformequality}]
For $c=1$, use Proposition~\ref{prop:leafpath} with $k=2,a=0,\ell=1$.
For $c\geq2$, start from $\mathcal F(c,2c)$.
This choice of depth makes $2c-1-d_T=-1$ in~\eqref{eq:binaryfamily},
giving a negative leading term in the intercept. The path extensions
below will raise its parity-independent part to the target value.
All $c-1$ pendant squares lie in the left half of the tree, since
$c-1\leq2^{2c-1}$. Extend the rightmost two sibling leaves by paths of
lengths $x,y\geq0$, and put $X=x+y$. These leaves lie on no cycle,
so the extensions preserve zero slope and both degree requirements.
By Proposition~\ref{prop:binaryfamily}, the original intercept is
$b_c+\theta_\epsilon$, where
\[
 b_c=-2\cdot4^c+\frac{7c^2-40c+29}{2}-8\Gamma.
\]

At the siblings' parent, the pair minimum rises by $\min(x,y)$.
Figure~\ref{fig:uniformequality} marks the affected branching vertices.
At every intermediate ancestor in the right half, the child masses
were tied; the edited child becomes heavier, leaving the minimum fixed.
At the tree top, the left child was heavier by $4(c-1)$, so the minimum
rises by $\min(X,4(c-1))$. All other tree pairs are unchanged. The edit
rule, with $\theta_\epsilon$ evaluated at the new order, therefore gives
\begin{equation}\label{eq:siblingintercept}
 \beta_\epsilon=b_c+2(c-1)X-2\min(x,y)-2\min(X,4(c-1))+\theta_\epsilon.
\end{equation}
For the $c=3$ example in Figure~\ref{fig:uniformequality}, $x=41$ and $y=1$ give
\[
 \Gamma=1,\qquad b_3=-150,\qquad X=42,\qquad \theta_\epsilon=0.
\]
The two pair-minimum gains are $1$ and $8$, the latter saturated at
$4(c-1)$, so
\[
 \beta_\epsilon=-150+4(42)-2(1)-2(8)=0.
\]

Put $q=0$ for odd $c$ and $q=1$ for even $c$. To obtain the stated
difference, we seek a parity-independent intercept of $q/2$.
In the regime $x\geq y$ and $X\geq4(c-1)$, the two minima in
\eqref{eq:siblingintercept} are $y$ and $4(c-1)$, so this target
is equivalent to
\[
 (c-1)X-y=4(c-1)+\frac{q-2b_c}{4}.
\]
Denote the right-hand side by $A$. The formula for $b_c$ gives
$2b_c\equiv3c^2+1\equiv q\pmod4$, so
\[
 A=4(c-1)+\frac{q-2b_c}{4}
   =4^c+\frac{-7c^2+56c-45+q}{4}+4\Gamma
\]
is an integer. Moreover $A\geq4(c-1)^2$: for $c=2$ we have $\Gamma=0$
and $A=26$; for $c\geq3$, use $A\geq4^c-2c^2$ and
$4^c\geq6c^2$, the latter following by induction from $c=3$.
Choose $y$ as the least nonnegative residue of $-A$ modulo $c-1$, and set
\[
 X=(A+y)/(c-1),\qquad x=X-y.
\]
Then $0\leq y\leq c-2$, $X\geq4(c-1)$, $x\geq y$, and $(c-1)X-y=A$.
Thus the parity-independent part of~\eqref{eq:siblingintercept} is
$b_c+2A-8(c-1)=q/2$. For odd $c$, $q=\theta_\epsilon=0$; for even $c$,
$\theta_\epsilon=(-1)^{n+\epsilon}/2$. Theorem~\ref{thm:general}
gives the stated difference for every $t\geq m$.
At each tree level the smaller child counts sum to at most $(c-1)/2$,
so $\Gamma\leq c(c-1)$. Consequently $A=4^c+O(c^2)$ and
$n=2\cdot4^c+3c+(A+y)/(c-1)=O(4^c)$.
Attaching the root path preserves maximum degree three; increasing
its length within the equality parity classes gives infinitely many
pairwise nonisomorphic graphs.
\end{proof}

\section*{Declarations}
\paragraph{Computational verification.}
The ancillary material contains verification code, exact
records and replay instructions. They check the comparison law, its
slope, parity and cutoff at every root of every connected graph with at
most seven vertices, and test the cactus formula and finite instances
of the constructions using integer distances, including the uniform
degree-three family for $2\leq c\leq5$.
The proofs do not depend on these finite checks.

\paragraph{Use of artificial intelligence.}
During preparation of this work, the author used Fable~5.1 (Claude Code)
and Astra~6.0 (OpenAI Codex) to assist with exploration, manuscript revision
and verification code. After using these tools, the author reviewed
and edited the content as needed and takes full responsibility for the
content of the publication.

\paragraph{Conflicts of interest.}
The author declares no conflicts of interest.

\begingroup
\small
\bibliographystyle{plain}
\bibliography{references}
\endgroup
\end{document}